\DeclareMathAlphabet{\mathpzc}{OT1}{pzc}{m}{it}
\begin{document}

\theoremstyle{plain}

\newtheorem{theorem}{Theorem}[section]
\newtheorem{lemma}[theorem]{Lemma}
\newtheorem{proposition}[theorem]{Proposition}
\newtheorem{corollary}[theorem]{Corollary}
\newtheorem{definition}[theorem]{Definition}
\newtheorem{Ass}[theorem]{Assumption}
\theoremstyle{definition}
\newtheorem{remark}[theorem]{Remark}
\newtheorem{SA}[theorem]{Standing Assumption}
\renewenvironment{proof}{{\parindent 0pt \it{ Proof:}}}{\mbox{}\hfill\mbox{$\Box\hspace{-0.5mm}$}\vskip 16pt}

\newcommand{\Y}{\mathsf{Y}}
\newcommand{\bC}{\mathbf{C}}
\newcommand{\pK}{\p^{\mathcal{K}}}
\newcommand{\loc}{{\mathrm{loc}}}
\newcommand{\of}{[\hspace{-0.06cm}[}
\newcommand{\gs}{]\hspace{-0.06cm}]}
\newcommand{\A}{\mathbf{A}}
\newcommand{\B}{\mathbb{B}}
\newcommand{\p}{\mathds{P}}
\newcommand{\Q}{\mathds{Q}}
\newcommand{\q}{\mathds{Q}}
\let\SETMINUS\setminus
\renewcommand{\setminus}{\backslash}
\renewcommand{\H}{{U}} 
\newcommand{\h}{U} 
\newcommand{\s}{\mathfrak{s}}
\def\stackrelboth#1#2#3{\mathrel{\mathop{#2}\limits^{#1}_{#3}}}
\crefname{lemma}{lemma}{lemmas}
\Crefname{lemma}{Lemma}{Lemmata}
\crefname{corollary}{corollary}{corollaries}
\Crefname{corollary}{Corollary}{Corollaries}
\crefname{listing}{assumption}{assumptions}
\Crefname{listing}{Condition}{Conditions}

\newcommand\myrot[1]{\mathrel{\rotatebox[origin=c]{#1}{$\Longrightarrow$}}}
\newcommand\NEarrow{\myrot{45}}
\newcommand\SEarrow{\myrot{-45}}

\renewcommand{\theequation}{\thesection.\arabic{equation}}
\numberwithin{equation}{section}

\newcommand\llambda{{\mathchoice
		{\lambda\mkern-4.5mu{\raisebox{.4ex}{\scriptsize$\backslash$}}}
		{\lambda\mkern-4.83mu{\raisebox{.4ex}{\scriptsize$\backslash$}}}
		{\lambda\mkern-4.5mu{\raisebox{.2ex}{\footnotesize$\scriptscriptstyle\backslash$}}}
		{\lambda\mkern-5.0mu{\raisebox{.2ex}{\tiny$\scriptscriptstyle\backslash$}}}}}

\newcommand{\tr}{\operatorname{tr}}
\newcommand{\D}{{\mathbb{D}}}
\newcommand{\E}{{\mathds{E}}}
\newcommand{\N}{{\mathbb{N}}}
\renewcommand{\P}{{\mathbf{P}}}
\newcommand{\cadlag}{c\`adl\`ag }
\newcommand{\C}{\mathbb{C}}
\newcommand{\on}{\operatorname}

\newcommand{\F}{\mathbf{F}}
\newcommand{\1}{\mathds{1}}
\newcommand{\f}{\mathcal{F}^{\hspace{0.03cm}0}}
\newcommand{\G}{\mathbf{G}}
\newcommand{\M}{\mathcal{M}^{\textup{sp}}}
\newcommand{\K}{\mathbb{K}}
\def\EM{\ensuremath{(\mathbb{EM})}\xspace}
\newcommand{\la}{\langle}
\newcommand{\ra}{\rangle}

\newcommand{\lle}{\langle\hspace{-0.085cm}\langle}
\newcommand{\rre}{\rangle\hspace{-0.085cm}\rangle}
\newcommand{\blle}{\Big\langle\hspace{-0.155cm}\Big\langle}
\newcommand{\brre}{\Big\rangle\hspace{-0.155cm}\Big\rangle}

\renewcommand{\epsilon}{\varepsilon}
\newcommand{\X}{\mathsf{X}}
\renewcommand{\F}{\mathcal{F}}
\renewcommand{\u}{\sigma} 
\renewcommand{\v}{\mu} 
\newcommand{\w}{\mathfrak{w}}
\renewcommand{\q}{\mathfrak{q}}
\renewcommand{\p}{\mathsf{P}}

\renewcommand{\P}{\mathds{P}}
\renewcommand{\A}{\mathcal{A}}
\newcommand{\Ab}{\mathbf{A}}
\newcommand{\Fb}{\mathbf{F}}
\newcommand{\bcdot}{\boldsymbol{\cdot}}
\renewcommand{\f}{\mathfrak{f}}

\title[On a Theorem of A.S. Cherny for SPDEs]{On a Theorem by A.S. Cherny for Semilinear Stochastic Partial Differential Equations} 
\author[D. Criens]{David Criens}
\author[M. Ritter]{Moritz Ritter}
\address{University of Freiburg, Ernst-Zermelo-Str. 1, 79104 Freiburg, Germany}
\email{david.criens@stochastik.uni-freiburg.de}
\email{moritz.ritter@stochastik.uni-freiburg.de}

\keywords{stochastic partial differential equation, martingale problem, weak solution, mild solution, dual Yamada--Watanabe theorem, weak uniqueness, joint weak uniqueness, pathwise uniqueness, strong uniqueness\vspace{1ex}}

\subjclass[2010]{60H15, 60G44, 60H05}

\thanks{DC acknowledges financial support from the DFG project No. SCHM 2160/15-1. \\}

\date{\today}
\maketitle

\frenchspacing
\pagestyle{myheadings}

\begin{abstract}
We consider analytically weak solutions to semilinear stochastic partial differential equations with non-anticipating coefficients driven by cylindrical Brownian motion. The solutions are allowed to take values in general separable Banach spaces. We show that weak uniqueness is equivalent to weak joint uniqueness, and thereby generalize a theorem by A.S. Cherny to an infinite dimensional setting. Our proof for the technical key step is different from Cherny's and uses cylindrical martingale problems. As an application, we deduce a dual version of the Yamada--Watanabe theorem, i.e. we show that strong existence and weak uniqueness imply weak existence and strong uniqueness.
\end{abstract}

\section{Introduction}
The classical Yamada--Watanabe theorem \cite{YW} for finite dimensional Brownian stochastic differential equations (SDEs) states that weak existence and strong (i.e. pathwise) uniqueness implies strong existence and weak uniqueness (i.e. uniqueness in law). Jacod \cite{J80} lifted this result to SDEs driven by semimartingales and extended it by showing that weak existence and strong uniqueness is equivalent to strong existence and weak joint uniqueness, i.e. uniqueness of the joint law of the solution process and its random driver. 

In view of Jacod's theorem, it is an interesting and natural question whether the converse direction in the classical Yamada--Watanabe theorem holds, i.e. whether strong existence and weak uniqueness implies weak existence and strong uniqueness. This implication is nowadays often called the \emph{dual Yamada--Watanabe theorem}. 
For finite dimensional Brownian SDEs, A.S. Cherny \cite{doi:10.1137/S0040585X97979093} answered this question affirmatively by proving that weak uniqueness is equivalent to weak joint uniqueness. 

More recently, Cherny's results have been generalized to several infinite dimensional frameworks. In \cite{ondrejat-diss,tappe20} the theorems were established for mild solutions to semilinear stochastic partial differential equations (SPDEs) and in \cite{doi:10.1142/S0219493710002991,rem20} for the variational framework.

In this short article we prove Cherny's result for analytically weak solutions to the Banach space valued semilinear SPDE  
\begin{align}\label{eq: SPDE}
  d X_t = (AX_t + \v_t (X))dt + \u_t(X) d W_t, \quad X_0 = x_0,
  \end{align} 
  where \(A\) is a densely defined operator and \(\v\) and \(\u\) are progressively measurable processes on the path space of continuous functions.
Furthermore, we deduce the dual theorem for our framework.

To the best of our knowledge, these results are new and extend previous ones in several directions. 
For instance, we study Banach space valued equations, while in \cite{tappe20} only Hilbert space valued equations are considered,
and allow non-anticipating coefficients, which are not covered in \cite{ondrejat-diss}.
In particular, as we work with analytically weak solutions instead of mild solutions, we require no geometric assumptions on the underlying Banach space and only minimal assumptions on the linearity \(A\). 
Besides providing a new result, another motivation for this article is to present a proof whose main technical step appears to us more straightforward and less technical than the classical one.
Let us explain our impression in more detail. The basic strategy, which is borrowed from the finite dimensional case and also used in \cite{ondrejat-diss,doi:10.1142/S0219493710002991,rem20},\footnote{The proof in \cite{tappe20} is indirect in the sense that it uses the method of the moving frame to transfer results from \cite{rem20} for infinite dimensional SDEs to SPDEs.} is to construct an infinite dimensional Brownian motion \(V\), independent of \(X\), such that the noise \(W\) can be recovered from the solution process \(X\) and \(V\). The technical challenge in this argument is the proof for the independence of \(X\) and \(V\).
Cherny's original proof for this used additional randomness, an enlargement of filtration and a conditioning argument. In \cite{ondrejat-diss,doi:10.1142/S0219493710002991,rem20} these ideas have been adapted to the respective infinite dimensional frameworks.
Our approach is different: We transfer ideas from \cite{criens20} for one dimensional SDEs with jumps to our continuous infinite dimensional setting and establish the independence with arguments based on cylindrical martingale problems.
More precisely, we provide martingale characterizations for weak solutions to SPDEs and infinite dimensional Brownian motion, then show that the quadratic variations of the corresponding test martingales vanish and finally deduce the desired independence with help of changes of measure. 
In comparison with Cherny's method, we work directly with \(X\) and \(V\) without introducing more randomness.
Furthermore, once the martingale characterizations are established, the arguments are quite elementary.

The paper is structured as follows: In \Cref{sec: main} we introduce our setting and state our main results: \Cref{theo: main1} and \Cref{coro: DYW}. 
At the end of \Cref{sec: main} we shortly comment on possible applications of our results.
The proof of \Cref{theo: main1} is given in \Cref{sec: pf}. To make the article as self-contained as possible, we added \Cref{app: TT}, where we collect some technical facts needed in our proofs.

Let us end the introduction with a short comment on notation and terminology: We mainly follow the standard references \cite{DePrato,roeckner15}. A detailed construction and standard properties of the stochastic integral can also be found in \cite{ondrejat-diss}.

\section{The Setting and Main Results}\label{sec: main}

Let \(\H\) be a real separable Banach space and let \(H\) be a real separable Hilbert space. We denote the corresponding norms by \(\|\cdot\|_\H\) and \(\|\cdot\|_H\) and the scalar product of \(H\) by \(\langle \cdot, \cdot\rangle_H\). As usual, the topological dual of \(H\) is identified with itself via the Riesz representation. The topological dual of \(\H\) is denoted by \(\h^*\) and we write
 \[
\langle y, y^* \rangle_\H \triangleq y^* (y), \quad (y, y^*) \in \H \times \h^*.
 \]
   The space of bounded linear operators \(H \to \H\) is denoted by \(L \triangleq L (H, \H)\) and the corresponding operator norm is denoted by \(\| \cdot \|_L\).
We define \(\C \triangleq C(\mathbb{R}_+, \H)\) to be the space of continuous functions \(\mathbb{R}_+ \to \H\). Let \(\X = (\X_t)_{t \geq 0}\) be the coordinate process on \(\C\), i.e. \(\X(\omega) = \omega\) for \(\omega \in \C\), and set \(\mathcal{C} \triangleq \sigma (\X_t, t \in\mathbb{R}_+)\) and \(\bC \triangleq (\mathcal{C}_t)_{t \geq 0}\), where \(\mathcal{C}_t \triangleq\bigcap_{s > t} \sigma (\X_u, u \in [0, s])\) for \(t \in \mathbb{R}_+\). 

Let us shortly comment on the driving noise of the SPDEs under consideration and on stochastic integration.
We call a family \(W \triangleq (\beta^k)_{k \in \mathbb{N}}\) of independent one dimensional standard Brownian motions a \emph{standard \(\mathbb{R}^\infty\)-Brownian motion}. 
It is well-known (see, e.g. \cite[Chapter~2]{roeckner15}) that any standard \(\mathbb{R}^\infty\)-Brownian motion can be seen as a trace class Brownian motion in another Hilbert space:
Let \(J\) be a one-to-one Hilbert--Schmidt embedding of \(H\) into another separable Hilbert space \((\overline{H}, \|\cdot\|_{\overline{H}}, \langle \cdot, \cdot\rangle_{\overline{H}})\) and let \((e_k)_{k \in \mathbb{N}}\) be an orthonormal basis of \(H\).
The formula 
\[
\overline{W} \triangleq \sum_{k = 1}^\infty \beta^k J e_k
\]
defines a trace class \(\overline{H}\)-valued Brownian motion with covariance \(J J^*\). 
Conversely, any trace class \(\overline{H}\)-valued Brownian motion with covariance \(J J^*\) has such a series representation.
Let \(\sigma = (\sigma_t)_{t \geq 0}\) be an \(H\)-valued progressively measurable process such that a.s.
\begin{align} \label{eq: sigma inte}
\int_0^t \|\sigma_s\|^2_H ds < \infty, \quad t \in \mathbb{R}_+.
\end{align}
Then, \(\widetilde{\sigma} \triangleq \langle \sigma, \hspace{0.01cm} \cdot\hspace{0.03cm}\rangle_H\) defines a progressively measurable process with values in \(L_2 (H, \mathbb{R})\), the space of Hilbert--Schmidt operators \(H \to \mathbb{R}\), and \(\|\widetilde{\sigma}\|_{L_2(H, \mathbb{R})} = \|\sigma\|_H\).
The stochastic integral of \(\widetilde{\sigma}\) w.r.t. a standard \(\mathbb{R}^\infty\)-Brownian motion \(W\) is defined by
\[
\int_0^\cdot \langle \sigma_s, d W_s\rangle_H \equiv \int_0^\cdot \widetilde{\sigma}_s d W_s \triangleq \int_0^\cdot \widetilde{\sigma}_s J^{-1} d \overline{W}_s \equiv \int_0^\cdot \langle \sigma_s, J^{-1} d \overline{W}_s \rangle_H, 
\]
where the stochastic integrals on the r.h.s. are defined in the classical manner (see, e.g. \cite[Chapter~2]{roeckner15}). 
We stress that this definition of the stochastic integral is independent of the choice of \(\overline{H}\) and \(J\). 
It is also well-known (see, e.g. \cite[Section 4.1.2]{DePrato}) that a standard \(\mathbb{R}^\infty\)-Brownian motion can be seen as a cylindrical Brownian motion \(\{B (x) \colon x \in H\}\) defined by the formula
\[
B (x) \triangleq \sum_{k = 1}^\infty \langle x, e_k\rangle_H \beta^k, \quad x \in H.
\]
Conversely, any cylindrical Brownian motion has such a series representation. For a simple \(H\)-valued process \(\sigma = \sum_{k = 1}^m f^k x^k\), where \(f^k\) are bounded real-valued progressively measurable processes and \(x^k \in H\), the stochastic integral of \(\sigma\) w.r.t. \(B\) can be defined by
\[
\int_0^\cdot \langle \sigma_s, d B_s \rangle_H \triangleq \sum_{k = 1}^m \int_0^\cdot f^k_s d B_s (x^k), 
\]
where the stochastic integrals on the r.h.s. are classical stochastic integrals w.r.t. one dimensional continuous local martingales.
This definition extends to more general integrands by approximation, see \cite{miro98} or \cite{ondrejat-diss}.
In particular, for any \(H\)-valued progressively measurable process \(\sigma = (\sigma_t)_{t \geq 0}\) satisfying \eqref{eq: sigma inte} it holds that
\[
\int_0^\cdot \langle \sigma_s, d W_s\rangle_H = \int_0^\cdot \langle \sigma_s, J^{-1} d \overline{W}_s\rangle_H = \int_0^\cdot \langle \sigma_s, d B_s\rangle_H.
\]

In the following we fix \(\overline{H}\) and \(J\) and identify the law of \(W\) with the law of \(\overline{W}\) seen as a probability measure on the canonical space of continuous functions \(\mathbb{R}_+ \to \overline{H}\) equipped with the \(\sigma\)-field generated by the corresponding coordinate process (which is its Borel \(\sigma\)-field when endowed with the local uniform topology). 

The input data for the SPDE \eqref{eq: SPDE} is the following:
\begin{enumerate}
\item[-]
Two processes defined on the filtered space \((\C, \mathcal{C}, \bC)\): An \(\H\)-valued progressively measurable process \(\v =(\v_t)_{t  \geq 0}\) and an \(L\)-valued progressively measurable process \(\u = (\u_t)_{t \geq 0}\), i.e. \(\u h\) is progressively measurable for every \(h \in H\).
\item[-] A densely defined operator \(A \colon D(A) \subseteq \H \to \H\) with adjoint \(A^* \colon D(A^*) \subseteq \H^* \to \H^*\) whose domain \(D(A^*)\) is sequentially weak\(^*\) dense in \(\H^*\).
\item[-] An initial value \(x_0 \in \H\). 
\end{enumerate}

\begin{remark}
Often enough \(\H\) is itself a Hilbert space, or a least a reflexive Banach space, and \(A\) is the generator of a \(C_0\)-semigroup on \(\H\). In these cases \(A\) and \(A^*\) are densely defined and in particular \(D(A^*)\) is sequentially weak\(^*\) dense.
\end{remark}

In the following definition we introduce \emph{analytically and probabilistically weak  solutions to the SPDE \eqref{eq: SPDE}} and two \emph{weak uniqueness concepts}.
\begin{definition}\label{def: weak sol}
\begin{enumerate}
\item[\textup{(i)}] We call \((\B, W)\) a \emph{driving system}, if \(\B = (\Omega, \mathcal{F}, (\mathcal{F}_t)_{t \geq 0}, \P)\) is a filtered probability space with right-continuous and complete filtration which supports a standard \(\mathbb{R}^\infty\)-Brownian motion \(W\).
\item[\textup{(ii)}]
We call \((\B, W, X)\) a \emph{weak solution} to the SPDE \eqref{eq: SPDE},
if \((\B, W)\) is a driving system and \(X\) is a continuous \(\H\)-valued adapted process on \(\B\) such that a.s.
\begin{align}\label{eq: sol inte}
\int_0^t \|\v_s(X)\|_\H ds + \int_0^t \|\u_s(X)\|_{L}^2 ds < \infty, \quad t \in \mathbb{R}_+,
\end{align}
and for all \(y^* \in D(A^*)\) a.s.
\begin{equation}\label{eq: main eq SPDE}
\begin{split}
\langle X, y^*\rangle_\H = \langle x_0&, y^*\rangle_\H +  \int_0^\cdot  \langle X_s, A^*y^*\rangle_\H ds \\&+ \int_0^\cdot \langle \v_s(X), y^*\rangle_\H ds + \int_0^\cdot \langle \u_s (X)^* y^*, d W_s \rangle_{H}.
\end{split}
\end{equation}
The process \(X\) is called a \emph{solution process} on the driving system \((\B, W)\).
\item[\textup{(iii)}] We say that \emph{weak (joint) uniqueness} holds for the SPDE \eqref{eq: SPDE}, if for any two weak solutions \((\B^1, W^1, X^1)\) and \((\B^2, W^2, X^2)\) the laws of \(X^1\) and \(X^2\) (the laws of \((X^1, W^1)\) and \((X^2, W^2)\)) coincide. The law of a solution process is called a \emph{solution measure}.
\end{enumerate}
\end{definition}

Our main result is the following.
\begin{theorem}\label{theo: main1}
Weak uniqueness holds if and only if weak joint uniqueness holds.
\end{theorem}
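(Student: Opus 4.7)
The direction weak joint uniqueness $\Rightarrow$ weak uniqueness is immediate by taking marginals. For the converse, assume weak uniqueness and let $(\B^i, W^i, X^i)$, $i=1,2$, be two weak solutions; by assumption $X^1 \stackrel{d}{=} X^2$. The plan, following the classical strategy of Cherny, is to construct on each probability space, after enlarging it to support an auxiliary standard $\mathbb{R}^\infty$-Brownian motion $V^i$ independent of $\F^i_\infty$, a new standard $\mathbb{R}^\infty$-Brownian motion $\widetilde W^i$ that is a \emph{measurable functional of the pair $(X^i, V^i)$ alone} and satisfies $(X^i, \widetilde W^i) \stackrel{d}{=} (X^i, W^i)$. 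Once these two facts are established, the conclusion follows quickly: since $V^i$ is independent of $X^i$ and $V^1 \stackrel{d}{=} V^2$, one has $(X^1, V^1) \stackrel{d}{=} (X^2, V^2)$; applying the common functional gives $(X^1, \widetilde W^1) \stackrel{d}{=} (X^2, \widetilde W^2)$, and the postulated joint-law identity then yields $(X^1, W^1) \stackrel{d}{=} (X^2, W^2)$.

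The construction of $\widetilde W^i$ will mimic the finite-dimensional one through the Moore--Penrose pseudo-inverse $\u_s^+(X^i)$. The part of $W^i$ corresponding to $(\ker \u_s(X^i))^\perp \subseteq H$ is recoverable from $X^i$ by inverting $\u_s(X^i)$ against the stochastic-integral representation of $X^i$ in \eqref{eq: main eq SPDE}, while the complementary part, associated with $\ker \u_s(X^i)$, does not enter the SPDE and will be replaced by the analogous part of $V^i$. Routine computations based on L\'evy's characterization then show that the resulting $\widetilde W^i$ is again a standard $\mathbb{R}^\infty$-Brownian motion, that it satisfies $\int_0^\cdot \u_s(X^i) d\widetilde W^i = \int_0^\cdot \u_s(X^i) dW^i$, so that $(\widetilde \B^i, \widetilde W^i, X^i)$ remains a weak solution to \eqref{eq: SPDE}, and that it is expressible as a measurable functional of $(X^i, V^i)$ alone.

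The main obstacle is to verify the joint-law identity $(X^i, \widetilde W^i) \stackrel{d}{=} (X^i, W^i)$, or equivalently that the kernel part of $W^i$ is independent of $X^i$ under $\P^i$. In place of Cherny's enlargement-and-conditioning argument, the plan---borrowing from \cite{criens20}---is to argue via cylindrical martingale problems. First, I would establish a martingale characterization of weak solutions: a triple $(\B, W, X)$ is a weak solution if and only if $W$ is a standard $\mathbb{R}^\infty$-Brownian motion and, for every $y^* \in D(A^*)$, the process
\begin{align*}
M^{y^*}_t \triangleq \langle X_t, y^* \rangle_\H - \langle x_0, y^*\rangle_\H - \int_0^t \langle X_s, A^* y^*\rangle_\H ds - \int_0^t \langle \v_s(X), y^*\rangle_\H ds
\end{align*}
is a continuous local martingale with quadratic variation $\int_0^\cdot \|\u_s(X)^* y^*\|_H^2 ds$ and with the prescribed cross-variation with $W$. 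A standard $\mathbb{R}^\infty$-Brownian motion admits a parallel L\'evy-type characterization via its coordinate martingales. The crucial step is then to show that the cross quadratic variations between the $M^{y^*}$ and the test martingales arising from the kernel part of $W^i$ vanish identically, because the latter is by construction orthogonal to the range of $\u$. A Girsanov-type change of measure, driven by Dol\'eans--Dade exponentials of bounded functionals of these orthogonal test martingales, then leaves the SPDE martingale problem for $X^i$ invariant, which translates the vanishing cross-variations into the required independence. The hardest part of the plan will be to execute this change-of-measure argument cleanly in the Banach-space-valued cylindrical setting, under the minimal hypotheses that $D(A^*)$ need only be weak-$*$ sequentially dense in $\H^*$ and that $\u$ may degenerate substantially; verifying that the associated stochastic integrals, martingale characterizations, and exponential local martingales are well defined and compatible constitutes the bulk of the technical work.
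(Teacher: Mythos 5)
Your proposal follows essentially the same route as the paper: enlarge the space by an independent auxiliary Brownian motion, decompose $W$ along $\ker\u_t(X)$ and its orthogonal complement, recover the non-kernel part of $W$ from $X$ itself, and establish the crucial independence of the kernel part from $X$ via martingale-problem characterizations of the solution and of the Brownian motion, vanishing cross quadratic variations, and a change of measure that exploits the weak uniqueness hypothesis. The one caveat is that the literal Moore--Penrose pseudo-inverse $\u^+$ should be replaced, as in the paper, by approximating $\psi_t(\omega)x$ with elements $\u_t(\omega)^*y^*_m$ (using $\psi_t(\omega)(H)\subseteq\overline{\u_t(\omega)^*(\H^*)}$ together with a measurable selection), since $\u_t(\omega)$ maps into a Banach space and need not have closed range.
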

The proof of this theorem is given in \Cref{sec: pf} below.
We also provide a dual Yamada--Watanabe theorem for our framework. To formulate it we need more terminology.
\begin{definition}
	\begin{enumerate}
		\item[\textup{(i)}]
		We say that \emph{strong existence} holds for the SPDE \eqref{eq: SPDE}, if there exists a weak solution \((\B, W, X)\) such that \(X\) is adapted to the completion of the natural filtration of \(\overline{W}\).
	\item[\textup{(ii)}] We say that \emph{strong uniqueness} holds for the SPDE \eqref{eq: SPDE}, if any two solution processes on the same driving system are indistinguishable. 
		\end{enumerate}
\end{definition}
The classical Yamada--Watanabe theorem for the Markovian version of our framework is given by \cite[Theorem 5.3]{kunze13}.
\begin{corollary}[Dual Yamada--Watanabe Theorem] \label{coro: DYW}
	Weak Uniqueness and strong existence imply strong uniqueness and weak existence.
\end{corollary}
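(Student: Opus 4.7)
Weak existence is immediate because every strong solution is in particular a weak solution, so the substance of the corollary is strong uniqueness. My plan is the classical Jacod-style functional-representation argument, which runs smoothly once \Cref{theo: main1} upgrades the available weak uniqueness to weak joint uniqueness.

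The first step is to apply \Cref{theo: main1}, so that weak joint uniqueness is in hand. Second, using strong existence, I fix a weak solution $(\B', W', X')$ whose solution process $X'$ is adapted to the completion of the natural filtration of $\overline{W}'$. A Doob-type functional-representation argument, applied at a countable dense set of times and glued together via path continuity of $X'$, produces a Borel measurable map $F \colon C(\mathbb{R}_{+}, \overline{H}) \to \C$, defined on the uncompleted canonical $\sigma$-field of Wiener paths, such that $X' = F(\overline{W}')$ almost surely.

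With $F$ in hand, the rest is formal. For an arbitrary weak solution $(\B, W, X)$, weak joint uniqueness forces the law of $(X, \overline{W})$ to equal the law of $(F(\overline{W}'), \overline{W}')$, which is concentrated on the Borel graph of $F$. Hence $X = F(\overline{W})$ almost surely. Applied to two solution processes $X^{1}, X^{2}$ on a common driving system $(\B, W)$, this yields $X^{1} = F(\overline{W}) = X^{2}$ almost surely, and path continuity upgrades the equality to indistinguishability, which is precisely strong uniqueness.

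The main technical obstacle is the construction of $F$ on the \emph{uncompleted} canonical $\sigma$-field, so that it can be freely transported across different ambient probability spaces. This is classical for continuous Wiener-type noise and can be carried out as in \cite{kunze13}, but it requires some care. Once $F$ is secured, the graph-support argument combined with \Cref{theo: main1} closes the proof.
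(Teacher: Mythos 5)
Your proposal is correct and follows essentially the same route as the paper: reduce to weak joint uniqueness via \Cref{theo: main1}, extract from the strong solution a measurable map \(F\) with \(X = F(\overline{W})\) a.s. (the paper does this via the disintegration \(\p(dx,dw)=\delta_{F(w)}(dx)\mathds{W}(dw)\) using \cite[Lemmata~1.13,~1.25]{Kallenberg}, which packages your Doob-type representation and measurability concerns), and then transport this identity to any driving system by the graph-support argument. No substantive difference.
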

\begin{proof}
	Due to \Cref{theo: main1}, it suffices to show that weak joint uniqueness and strong existence imply strong uniqueness. To prove this, we follow the proof of \cite[Theorem~8.3]{J80}.
	Let \(\p\) be the unique joint law of a solution process and its driver, and let \(\mathds{W}\) be the unique law of a trace class \(\overline{H}\)-valued Brownian motion with covariance \(J J^*\). As strong existence holds, \cite[Lemmata~1.13,~1.25]{Kallenberg} imply the existence of a measurable map \(F \colon C(\mathbb{R}_+, \overline{H}) \to \C = C(\mathbb{R}_+, \H)\) such that 
	\[
	\p(dx, dw) = \delta_{F (w)} (dx) \mathds{W}(dw).
	\]
Let \(((\Omega, \mathcal{F}, (\mathcal{F}_t)_{t \geq 0}, \P), W)\) be a driving system which supports two solution processes \(X\) and \(Y\). Recalling that joint weak uniqueness holds, we obtain
\[
\P\big(X = F(\overline{W})\big) = \P\big(Y = F(\overline{W})\big) = \iint \1 \{x = F (w)\} \p (dx, dw) = 1.
\]
Consequently, strong uniqueness holds and the proof is complete.
\end{proof}

Let us relate weak solutions to so-called \emph{mild solutions}, which are also frequently used in the literature (see, e.g. \cite{ondrejat-diss,tappe20}). Let \(L_2\) be the space of radonifying operators \(H \to \H\).
The following proposition is a direct consequence of \cite[Theorem~13]{ondrejat-diss}.
\begin{proposition}\label{prop: mild sol}
Assume that \(\H\) is 2-smooth and that \(A\) is the generator of a \(C_0\)-semigroup \((S_t)_{t \geq 0}\) on \(\h\).
	Let \((\B, W)\) be a driving system which supports a continuous \(\H\)-valued adapted process \(X\)
such that a.s.
\[
\int_0^t \|S_{t - s} \u_s (X)\|^2_{L_2} ds < \infty, \quad t \in \mathbb{R}_+.
\]
Then, \(X\) is a solution process on \((\B, W)\) if and only if \eqref{eq: sol inte} holds and a.s.
	\[
	X_t = S_t X_0 + \int_0^t S_{t - s} \v_s (X) ds + \int_0^t S_{t - s} \u_s(X) d W_s, \quad t \in \mathbb{R}_+.
	\]
\end{proposition}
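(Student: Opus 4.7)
The plan is to reduce the statement to a known equivalence result for semilinear SPDEs in 2-smooth Banach spaces, namely Theorem 13 in Ondrejat's thesis, and to verify that the hypotheses match. The 2-smoothness of $\H$ is precisely what is needed to have a stochastic integration theory for radonifying-operator-valued integrands against a cylindrical Brownian motion, and the $C_0$-semigroup hypothesis on $A$ is what makes the stochastic convolution $\int_0^t S_{t-s}\u_s(X)\,dW_s$ meaningful as an $\H$-valued process under the stated integrability assumption on $s\mapsto \|S_{t-s}\u_s(X)\|_{L_2}^2$.

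First I would check that, under the finite integrability conditions in the proposition, both the analytic integral $\int_0^\cdot\langle\u_s(X)^*y^*,dW_s\rangle_H$ appearing in \eqref{eq: main eq SPDE} and the $\H$-valued stochastic convolution $\int_0^t S_{t-s}\u_s(X)\,dW_s$ are well-defined, and that the drift integrals $\int_0^t\|\v_s(X)\|_\H ds$ are finite (which is precisely \eqref{eq: sol inte}). This reconciles the two integrability requirements: the weak formulation uses \eqref{eq: sol inte}, while the mild formulation needs the stronger-looking $L_2$-norm condition, but together with \eqref{eq: sol inte} these are exactly the hypotheses of Ondrejat's equivalence.

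The proof then naturally splits into two implications. For the direction mild $\Rightarrow$ weak, one tests the mild equation against $y^* \in D(A^*)$, uses the identity $\tfrac{d}{dt}\langle S_t x,y^*\rangle_\H = \langle S_t x, A^*y^*\rangle_\H$ for $x\in\H$, and applies a stochastic Fubini theorem to swap the order of integration in the convolution term, recovering \eqref{eq: main eq SPDE}. For the converse weak $\Rightarrow$ mild, the standard technique is to apply the time-dependent test ``function'' $s\mapsto S_{t-s}^*y^*$ (with $y^*\in D(A^*)$) to the weak equation and invoke an Itô product/integration-by-parts argument, which formally cancels the $A^*y^*$ term against the $\tfrac{d}{ds}S_{t-s}^*y^* = -A^*S_{t-s}^*y^*$ contribution, yielding the mild form after testing against a sequentially weak$^*$ dense subset of $\H^*$.

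The main obstacle is the weak-to-mild direction, where one must justify the use of the time-dependent test functional $S_{t-s}^*y^*$ inside \eqref{eq: main eq SPDE} and pass from the duality pairing $\langle\,\cdot\,,y^*\rangle_\H$ to genuine $\H$-valued identities. This is a delicate approximation argument in the 2-smooth Banach space setting, but it is exactly what Ondrejat's Theorem 13 establishes; hence after the hypothesis-checking above, the proposition follows by a direct appeal to that theorem, with no additional work required beyond identifying notations.
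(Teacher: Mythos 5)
Your proposal is correct and follows essentially the same route as the paper, which proves the proposition simply as a direct consequence of Theorem~13 in Ondrej\'at's thesis \cite{ondrejat-diss}. The additional sketch of the two implications (stochastic Fubini for mild~$\Rightarrow$~weak, time-dependent test functionals $S_{t-s}^*y^*$ for weak~$\Rightarrow$~mild) is a faithful outline of what that cited theorem establishes, but the paper does not reproduce it.
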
 
This proposition shows that certain results from the literature are special cases of ours. For instance, \Cref{theo: main1} generalizes \cite[Theorem 1.3]{tappe20}, and \Cref{coro: DYW} generalizes \cite[Theorem 1.6]{tappe20}.

We end this section with a comment on a possible application of our results. It is interesting to prove strong uniqueness for SPDEs. Similar to the finite dimensional case, \Cref{coro: DYW} shows that strong uniqueness can be deduced from weak uniqueness and strong existence. This strategy is e.g. interesting for equations of the type 
\begin{align} \label{eq: OU with drift}
d X_t = (A X_t + \v_t (X)) dt + d W_t,
\end{align}
whose weak properties can be deduced via Girsanov's theorem from the corresponding properties of the Ornstein--Uhlenbeck equation
\[
d X_t = A X_t dt + d W_t,
\]
see \cite[Appendix I]{roeckner15} for such an argument. In other words, by the Yamada--Watanabe theorems (\Cref{coro: DYW} and \cite[Theorem 5.3]{kunze13}), typically strong existence and uniqueness are equivalent for \eqref{eq: OU with drift}.
More generally, Girsanov's theorem can be used to deduce weak properties for equations of the type
\[
d X_t = (A X_t + \u_t (X) \v_t (X)) dt + \u_t (X) d W_t
\]
from the corresponding properties of the equation
\[
d X_t = A X_t dt + \u_t (X) d W_t.
\]
It is interesting to note that the strong uniqueness properties of \eqref{eq: OU with drift} turn out to be quite subtle for general non-anticipating \(\v\), in fact more subtle than for Markovian \(\v\). 
For suitable linearities \(A\), it was proven in \cite{DFPR13,DFPR14} for a Hilbert space setting that the Markovian equation
\[
d X_t = (AX_t + \v(X_t))dt + d W_t
\]
satisfies strong existence for every (locally) bounded \(\v\). This remarkable result is not true for non-anticipating \(\v\). Indeed, Tsirel'son's example (\cite[Section~V.18]{RW2}) shows that even for bounded non-anticipating \(\v\) the SPDE \eqref{eq: OU with drift} might not satisfy strong uniqueness.

\section{Proof of Theorem \ref{theo: main1}}\label{sec: pf}
The \emph{if} implication is obvious. Thus, we will only prove the \emph{only if} implication. Assume that weak uniqueness holds for the SPDE \eqref{eq: SPDE}.

		Let \(X\) be a solution process to the SPDE \eqref{eq: SPDE} which is defined on a driving system \(((\Omega^*, \mathcal{F}^*, (\mathcal{F}^*_t)_{t \geq 0}, \P^*), W)\).
		We take a second driving system \(((\Omega^o, \mathcal{F}^o, (\mathcal{F}^o_t)_{t \geq 0}, \P^o), B)\) and set
		\[
		\Omega \triangleq \Omega^* \times \Omega^o, \qquad \mathcal{F} \triangleq \mathcal{F}^* \otimes \mathcal{F}^o, \qquad \P \triangleq \P^* \otimes \P^o.
		\]
		Define \(\mathcal{F}_t\) to be the \(\P\)-completion of the \(\sigma\)-field \(\bigcap_{s > t} (\mathcal{F}^*_s \otimes \mathcal{F}^o_s)\). 
		In the following the filtered probability space \(\mathbb{B} = (\Omega, \mathcal{F}^\P, (\mathcal{F}_t)_{t \geq 0}, \P)\) will be our underlying space. We extend \(X, W\) and \(B\) to \(\B\) by setting
		\[
		X (\omega^*, \omega^o) \equiv X(\omega^*), \qquad W (\omega^*, \omega^o) \equiv W (\omega^*), \qquad B(\omega^*, \omega^o) \equiv B(\omega^o)
		\]
		for \((\omega^*, \omega^o) \in \Omega\).
		It is easy to see that \((\B, W)\) and \((\B, B)\) are again driving systems and that \(X\) is a solution process on \((\B, W)\).
		
		For a closed linear subspace \(H^o\) of \(H\) we denote by \(\operatorname{pr}_{H^o}\) the orthogonal projection onto \(H^o\). For \((\omega, t) \in \C \times \mathbb{R}_+\) we define 
		\[
		\phi_t(\omega) \triangleq \on{pr}_{\on{ker} (\u_t (\omega))} \in L (H), \qquad \psi_t (\omega) \triangleq \on{Id}_{H} - \ \phi_t (\omega) \in L(H).
		\]
		Let us summarize some basic properties of \(\phi\) and \(\psi\):
		\begin{equation}\label{eq: prop phi psi}
		\begin{split}
		\phi^2 =  \phi, \quad \psi^2 = \psi, \quad  \u \phi = 0_\H,\quad \u \psi =  \u, \quad \phi \psi =  \psi  \phi = 0_H.
		\end{split}
		\end{equation}
		The following lemma follows from an approximation argument (see the proof of \cite[Lemma 9.2]{ondrejat-diss} for details).
		\begin{lemma}\label{lem:  inte}
		The processes \(\phi = (\phi_t)_{t \geq 0}\) and \(\psi = (\psi_t)_{t \geq 0}\) are progressively measurable as processes on the canonical space \((\C, \mathcal{C}, \bC)\). 
		\end{lemma}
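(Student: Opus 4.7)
The plan is to reduce progressive measurability of \(\psi\) (whence of \(\phi = \on{Id}_H - \psi\)) to the measurable construction of the projection onto \(\on{ker}(\u_t(\omega))^\perp\) from a countable family of progressively measurable \(H\)-valued vectors spanning this subspace. Following the convention for operator-valued processes adopted in the paper, it is enough to show that \((t, \omega) \mapsto \psi_t(\omega) h\) is progressively measurable on \((\C, \bC)\) for every fixed \(h \in H\).

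First I would select, using separability of \(\H\) and the Hahn--Banach theorem, a countable family \(\{y_k^*\}_{k \geq 1} \subseteq \H^*\) that separates the points of \(\H\), and set \(v_k(t, \omega) \triangleq \u_t(\omega)^* y_k^* \in H\). The duality identity \(\langle h, v_k\rangle_H = \langle \u_t h, y_k^*\rangle_\H\) yields \(\on{ker}(\u_t(\omega))^\perp = \overline{\on{span}\{v_k(t, \omega) : k \in \mathbb{N}\}}\), and expanding \(v_k = \sum_j \langle \u_t h_j, y_k^*\rangle_\H\, h_j\) along an orthonormal basis \((h_j)\) of \(H\) transfers the assumed progressive measurability of the \(\H\)-valued processes \(\u h_j\) to each \(H\)-valued process \(v_k\). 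Next I would introduce the finite-rank, non-negative, self-adjoint operator
\[
M_n(t, \omega) \triangleq \sum_{k = 1}^n \langle \,\cdot\,, v_k(t, \omega)\rangle_H \, v_k(t, \omega) \in L(H),
\]
whose range equals \(\on{span}\{v_1, \ldots, v_n\}\) regardless of any linear dependencies among the \(v_k\). Spectral calculus identifies the orthogonal projection \(\psi_t^{(n)}\) onto this range with the SOT limit \(\lim_{\epsilon \downarrow 0} M_n(M_n + \epsilon \on{Id}_H)^{-1}\); the resolvent depends measurably on \(M_n\) via, e.g., the integral representation \((M + \epsilon \on{Id}_H)^{-1} = \int_0^\infty e^{-\epsilon s} e^{-s M}\, ds\) combined with polynomial approximation of \(e^{-sM}\). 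Taking first \(\epsilon \downarrow 0\) and then \(n \to \infty\), the monotone exhaustion \(\on{span}\{v_1, \ldots, v_n\} \nearrow \on{ker}(\u_t)^\perp\) yields \(\psi_t h = \lim_n \psi_t^{(n)} h\) in \(H\), and progressive measurability is preserved under these limits.

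The principal obstacle is the possible random degeneracy of the family \((v_k(t, \omega))\): a naive Gram--Schmidt orthogonalization or direct Gram-matrix inversion approach to the projection fails precisely when some of the \(v_k\) collapse, and this can happen in a measurable but highly discontinuous way in \((t, \omega)\). The outer-product operator \(M_n\) together with the spectral regularization \(M_n(M_n + \epsilon \on{Id}_H)^{-1}\) sidesteps this problem cleanly, because operator calculus handles multiplicities and degeneracies uniformly. Once this device is in place, the only remaining technical ingredient is the routine SOT-measurability of the resolvent \(M \mapsto (M + \epsilon \on{Id}_H)^{-1}\) on the cone of non-negative self-adjoint operators, and everything else reduces to propagating progressive measurability under continuous algebraic operations.
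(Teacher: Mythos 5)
Your argument is correct. Note first that the paper does not actually prove this lemma: it is dispatched in one line as ``an approximation argument'' with a pointer to the proof of Ondrej\'at's Lemma 9.2, so your writeup is a legitimate, self-contained way of filling in that reference rather than a deviation from an explicit proof. The steps all check out: separability of \(\H\) plus Hahn--Banach gives a countable norming/separating family \((y_k^*)\subset \H^*\); the equivalence ``\(h\perp \u_t(\omega)^*y_k^*\) for all \(k\) iff \(\u_t(\omega)h=0\)'' holds precisely because the family separates points, so \(\overline{\on{span}\{v_k(t,\omega)\}}=\on{ker}(\u_t(\omega))^\perp\); the coefficients \(\langle v_k,h_j\rangle_H=\langle \u_t h_j,y_k^*\rangle_\H\) transfer progressive measurability from \(\u h_j\) to \(v_k\); and since \(M_n\) is non-negative self-adjoint with \(\on{ran}(M_n)=\on{span}\{v_1,\dots,v_n\}=(\on{ker}M_n)^\perp\), spectral calculus gives \(M_n(M_n+\epsilon\on{Id}_H)^{-1}\to \psi^{(n)}\) in the strong operator topology as \(\epsilon\downarrow 0\), and the increasing-subspace limit \(\psi^{(n)}h\to\psi_t h\) finishes the proof, with progressive measurability surviving all pointwise limits. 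The one place where your route is heavier than necessary is the measurability of the resolvent: because \(M_n\) has finite rank, you can solve \((M_n+\epsilon\on{Id}_H)u=h\) explicitly as \(u=\epsilon^{-1}(h-\sum_k c_k v_k)\) with \((G+\epsilon I)c=(\langle h,v_j\rangle_H)_j\) for the Gram matrix \(G=(\langle v_i,v_j\rangle_H)_{i,j}\), so Cramer's rule yields progressive measurability directly, without the semigroup representation \(\int_0^\infty e^{-\epsilon s}e^{-sM_n}\,ds\) and the attendant polynomial approximation and Bochner-integral measurability checks. Either way, the argument is sound.
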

		
		By \Cref{lem:  inte}, we can define a sequence \(V = (V^k)_{k \in \mathbb{N}}\) of continuous local martingales via
		\[
		V^k \triangleq \int_0^\cdot \langle \phi_t (X) e_k, d W_t \rangle_H + \int_0^\cdot \langle \psi_t (X)e_k, d B_t \rangle_H, \quad k \in \mathbb{N}.
		\]
		The following lemma is the technical core of the proof. We postpone its proof till the proof of \Cref{theo: main1} is complete.
		\begin{lemma}\label{lem: main1}
			The process \(V\) is a standard \(\mathbb{R}^\infty\)-Brownian motion. Moreover, \(V\) is independent of \(X\), i.e. the \(\sigma\)-fields \(\sigma (\overline{V}_t, t \in \mathbb{R}_+)\) and \(\sigma (X_t, t \in \mathbb{R}_+)\) are independent, where \(\overline{V}\) is defined by the formula
			\[
			\overline{V} \triangleq \sum_{k = 1}^\infty V^k J e_k.
			\]
		\end{lemma}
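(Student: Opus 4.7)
The plan is to handle the lemma in three steps: first identify $V$ as a standard $\mathbb{R}^\infty$-Brownian motion via L\'evy's characterization, second record the algebraic identity $\phi_s(X) \sigma_s(X)^* = 0$ linking the projection $\phi$ with the coefficient $\sigma$, and third convert these ingredients into independence via a Girsanov argument that leverages the assumed weak uniqueness of the SPDE. Each $V^k$ is a continuous local martingale on $\B$ by construction, and because $W$ lives on $\Omega^*$ and $B$ on $\Omega^o$ they are $\P$-independent on $\B$, so the mixed $W$--$B$ contributions to $\langle V^k, V^j\rangle$ vanish and only $\int_0^t (\langle \phi_s(X) e_k, \phi_s(X) e_j\rangle_H + \langle \psi_s(X) e_k, \psi_s(X) e_j\rangle_H) \, ds$ survives; self-adjointness of the orthogonal projections and the identities $\phi^2 = \phi$, $\psi^2 = \psi$, $\phi + \psi = \mathrm{Id}_H$ from \eqref{eq: prop phi psi} collapse this integrand to $\delta_{kj}$, so L\'evy's theorem identifies $V$ as a standard $\mathbb{R}^\infty$-Brownian motion. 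The algebraic identity is likewise immediate: since $\mathrm{Im}(\phi_s(X)) = \ker \sigma_s(X)$ and $\langle \sigma_s(X)^* y^*, h\rangle_H = \langle \sigma_s(X) h, y^*\rangle_\H = 0$ for $h \in \ker \sigma_s(X)$ and $y^* \in \H^*$, self-adjointness of $\phi_s(X)$ forces $\phi_s(X) \sigma_s(X)^* y^* = 0$.

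For the main step, fix $T > 0$, $\lambda_1, \ldots, \lambda_n \in \mathbb{R}$ and set $h \triangleq \sum_{k=1}^n \lambda_k e_k$. The stochastic integrals $M^1 \triangleq \int_0^\cdot \langle \phi_s(X) h, dW_s\rangle_H$ and $M^2 \triangleq \int_0^\cdot \langle \psi_s(X) h, dB_s\rangle_H$ are continuous local martingales with $\langle M^1, M^2\rangle = 0$ (by independence of $W, B$) and $\langle M^1 + M^2\rangle_t = \|h\|_H^2\, t = (\sum_k \lambda_k^2)\, t$ (using $\phi + \psi = \mathrm{Id}_H$ as above), so Novikov's condition is trivially satisfied and
\[
Z_T \triangleq \exp\Bigl(\sum_{k=1}^n \lambda_k V^k_T - \tfrac{1}{2} T\textstyle\sum_k \lambda_k^2\Bigr) = \mathcal{E}(M^1 + M^2)_T
\]
is a true $(\mathcal{F}_t)$-martingale of mean one on $[0, T]$. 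Define $\Q \triangleq Z_T \cdot \P$ on $\mathcal{F}_T$. By Girsanov, $\widetilde{W} \triangleq W - \int_0^\cdot \phi_s(X)h\, ds$ is a standard $\mathbb{R}^\infty$-Brownian motion under $\Q$, and for every $y^* \in D(A^*)$,
\[
\int_0^\cdot \langle \sigma_s(X)^* y^*, dW_s\rangle_H = \int_0^\cdot \langle \sigma_s(X)^* y^*, d\widetilde{W}_s\rangle_H + \int_0^\cdot \langle \sigma_s(X)^* y^*, \phi_s(X)h\rangle_H \,ds,
\]
where the drift integral vanishes by the algebraic identity. Hence $X$ satisfies \eqref{eq: main eq SPDE} under $\Q$ with driver $\widetilde{W}$, i.e.\ $(\B, \widetilde{W}, X)$ is a $\Q$-weak solution on $[0, T]$, and the assumed weak uniqueness forces the $\Q$-law of $X|_{[0,T]}$ to coincide with its $\P$-law. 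Consequently, for any bounded measurable $F$ depending only on $X|_{[0, T]}$,
\[
\E_\P\bigl[F(X) \exp\bigl(\textstyle\sum_k \lambda_k V^k_T\bigr)\bigr] = e^{\frac{T}{2}\sum \lambda_k^2} \E_\P[F(X) Z_T] = e^{\frac{T}{2}\sum \lambda_k^2} \E_\Q[F(X)] = \E_\P[F(X)] \cdot \E_\P\bigl[\exp\bigl(\textstyle\sum_k \lambda_k V^k_T\bigr)\bigr],
\]
so the joint Laplace transform factorizes.

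Running the same argument with a piecewise-constant $h_s = \sum_l \mathds{1}_{(t_{l-1}, t_l]}(s)\sum_k \lambda_k^{(l)} e_k$ extends this to factorization of the joint Laplace transform of $F(X)$ with any finite-dimensional marginal $(V^k_{t_l})$ of $V$, and a monotone class / Dynkin argument then upgrades to independence of $\sigma(X_t, t \in \mathbb{R}_+)$ and $\sigma(\overline{V}_t, t \in \mathbb{R}_+)$. The main obstacle is the Girsanov step: confirming that $X$ genuinely remains a weak solution under $\Q$ (which is where the identity $\phi \sigma^* = 0$ does the critical work) and carefully passing from the finite-dimensional factorization to full independence on $\mathbb{R}_+$. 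The appealing feature compared to Cherny's original proof is that no enlargement of filtration, additional randomness or conditioning argument is required: one simply changes measure by an explicit exponential and reads off independence from the weak-uniqueness hypothesis.
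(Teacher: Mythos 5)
Your proposal is correct in substance, and for the crucial independence step it takes a genuinely different route from the paper. Step 1 (the quadratic variation computation plus L\'evy's characterization) and the algebraic identity \(\phi_t(X)\u_t(X)^*=0\) (equivalently \(\u\phi=0\), which is the form the paper uses) coincide with the paper's argument. For the independence, however, the paper works with cylindrical martingale problems: it characterizes trace class Brownian motions by the test martingales \(M^f\) of \Cref{lem: main3} and solution laws by the test martingales \(K^g\) of \Cref{lem: main2}, proves \([M^f,K^g]=0\), changes measure with the \emph{bounded} density \(M^f_S\), checks that the \(K^g\) remain local martingales so that the new law of \(X\) is again a solution measure, invokes weak uniqueness, and finally extracts independence by conditioning on sets in \(\sigma(X_t,\,t\in\mathbb{R}_+)\). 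You instead change measure with the stochastic exponential \(\mathcal{E}\big(\sum_k\lambda_k V^k\big)\), verify directly via Girsanov's drift transformation and \(\phi\,\u^*=0\) that \(X\) still solves \eqref{eq: main eq SPDE} with the shifted driver \(\widetilde W\), invoke weak uniqueness, and read off independence from the factorization of joint exponential moments. This bypasses the martingale-problem characterization of solution laws entirely, at the price of an exponential-moment/monotone-class argument at the end (harmless here, since \(V\) is Gaussian under \(\P\) so all the relevant moment generating functions are finite and determine the laws); the paper's route avoids exponential densities because the \(M^f_S\) are bounded. Both proofs hinge on exactly the same two pillars: the orthogonality \(\u\phi=0\) and the weak uniqueness hypothesis.

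One point you should tighten: weak uniqueness in \Cref{def: weak sol} compares laws on \(C(\mathbb{R}_+,\H)\), so it cannot literally be applied to ``a weak solution on \([0,T]\)''. The standard repair is to use the density process stopped at \(T\), i.e. \(d\Q/d\P|_{\mathcal{F}_t}=\mathcal{E}(N)_{t\wedge T}\), so that \(\Q\) is an equivalent probability measure on all of \(\mathcal{F}\), \(\widetilde W = W-\int_0^{\cdot\wedge T}\phi_s(X)h\,ds\) is a standard \(\mathbb{R}^\infty\)-Brownian motion on all of \(\mathbb{R}_+\) under \(\Q\), and \((\B,\widetilde W,X)\) is a weak solution in the sense of the paper; this is precisely the role played by the bounded stopping time \(S\) in the paper's Step 3. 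With that adjustment your argument goes through.
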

		For every \(k \in \mathbb{N}\), \Cref{prop: simple chain rule} in \Cref{app: TT} and \eqref{eq: prop phi psi} yield that
		\begin{align*}
		    \int_0^\cdot \langle \phi_t (X) e_k, d V_t \rangle_{H} &= \int_0^\cdot \langle \phi_t (X) \phi_t (X) e_k, d W_t \rangle_{H} + \int_0^\cdot \langle \psi_t (X) \phi_t (X) e_k, d B_t \rangle_{H}
		    \\&= \int_0^\cdot \langle \phi_t (X) e_k, d W_t \rangle_{H},
		\end{align*}
and consequently, 
\[
\beta^k = \int_0^\cdot \langle \psi_t (X) e_k, d W_t\rangle_H + \int_0^\cdot \langle \phi_t (X) e_k, d V_t \rangle_H, \quad k \in \mathbb{N}.
\]
	By the construction of the stochastic integral, the law of the second term is determined by the law of \((X, V)\), cf. \cite[Proposition 17.26]{Kallenberg} for a similar argument in a finite dimensional setting. In the following we explain that the same is true for the first term, borrowing some ideas from the proof of \cite[Lemma 9.2]{ondrejat-diss}. In fact, we even show that its law is determined by the law of \(X\).
	Define
	\[
	H (t, \omega, x, y^*) \triangleq \| \u_t(\omega)^* y^* - \psi_t (\omega) x \|_{H}, \quad (t, \omega, x, y^*) \in \mathbb{R}_+ \times \mathbb{C} \times H \times \h^*.
	\]
	\begin{lemma}\label{lem: H}
	   For every \(T > 0\) and \(x \in H\) there exists a sequence \((\s^m)_{m \in \mathbb{N}}\) of progressively measurable \(\h^*\)-valued processes on \((\mathbb{C}, \mathcal{C}, \bC)\) such that 
	   \[
	   H(t, \omega, x, \s^m_t(\omega)) \leq \tfrac{1}{m}, \quad (t, \omega, m) \in [0, T] \times \mathbb{C} \times \mathbb{N}.
	   \]
	\end{lemma}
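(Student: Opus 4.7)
The plan is to approximate $\psi_t(\omega) x$ by elements of the range of $\u_t(\omega)^*$ in a progressively measurable fashion. The starting observation is the Hilbert space identity
\[
(\ker \u_t(\omega))^\perp = \overline{\u_t(\omega)^*(\H^*)}^{\,\|\cdot\|_H},
\]
valid for any $\u_t(\omega) \in L(H, \H)$. Since $\phi_t = \on{pr}_{\ker \u_t}$ on $H$, the process $\psi_t = \on{Id}_H - \phi_t$ is the $H$-orthogonal projection onto $(\ker \u_t)^\perp$, so $\psi_t(\omega) x$ lies in the $H$-norm closure of $\u_t(\omega)^*(\H^*)$. Consequently, for every $(t, \omega) \in [0,T]\times\C$ and every $m \in \N$ the set
\[
\Gamma^m(t, \omega) \triangleq \{y^* \in \H^* \colon H(t, \omega, x, y^*) \le 1/m\}
\]
is non-empty, and the task reduces to extracting a progressively measurable selector $\s^m_t(\omega) \in \Gamma^m(t, \omega)$.

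For this I would set up a measurable selection argument. For each fixed $y^* \in \H^*$, progressive measurability of $(t, \omega) \mapsto H(t, \omega, x, y^*)$ follows from \Cref{lem:  inte} applied to $\psi$ together with progressive measurability of $\u$ and continuity of the $H$-norm. Separability of $\H$ combined with Banach--Alaoglu makes each closed ball $N B_{\H^*}$ weak-* compact and metrizable, hence Polish. I would exhaust $\Gamma^m$ by its bounded slices $\Gamma^{m,N}(t,\omega) \triangleq \Gamma^m(t,\omega) \cap N B_{\H^*}$, introduce the progressively measurable level $N^\star(t, \omega) \triangleq \inf\{N \in \N \colon \Gamma^{m,N}(t, \omega) \ne \emptyset\}$, and on each stratum $\{N^\star = N\}$ extract a weak-* Borel selector from $\Gamma^{m,N}$ via a Kuratowski--Ryll-Nardzewski-type theorem; pasting these selectors together yields $\s^m$.

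The main obstacle is the mismatch between the weak-* topology on $\H^*$ and the norm topology on $H$ appearing in the defining inequality of $\Gamma^m$: the map $y^* \mapsto \u_t(\omega)^* y^*$ is only weak-*-to-weak continuous from $\H^*$ to $H$, so $\Gamma^{m,N}(t, \omega)$ need not be weak-* closed in general and a naive compactness-plus-continuity selection is unavailable. To bypass this, I would adapt the approximation strategy used for \Cref{lem:  inte} in the proof of \cite[Lemma~9.2]{ondrejat-diss}: expand $\u_t(\omega)^* y^* - \psi_t(\omega) x$ in the orthonormal basis $(e_k)$ of $H$ and rewrite the $H$-norm inequality defining $\Gamma^m$ as a countable family of scalar inequalities in the progressively measurable coordinates $\langle \u_t e_k, y^* \rangle_\H$, then perform a greedy selection over a fixed countable subset of $\H^*$ which is rich enough (e.g.\ separating the points of $\H$ and with $\mathbb{Q}$-linear span dense in an appropriate sense) to realize arbitrarily close approximations of $\psi_t(\omega) x$ through $\u_t(\omega)^*$. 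Once the greedy index is shown to be progressively measurable, the resulting $\s^m$ satisfies the required bound by construction.
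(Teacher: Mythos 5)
Your two substantive ingredients coincide with the paper's: the non-emptiness of the sublevel sets \(\{y^* \in \h^* \colon H(t,\omega,x,y^*) < 1/m\}\) via the duality \(\on{ker}(\u_t(\omega))^\perp = \overline{\u_t(\omega)^*(\h^*)}\), and the Carath\'eodory structure of \(H\) (progressively measurable in \((t,\omega)\) for fixed \(y^*\), continuous in \(y^*\) for the norm topology). The paper stops exactly there and invokes the ready-made measurable selection result \cite[Proposition~8.8]{ondrejat-diss} with \(\h^*\) carrying the \emph{norm} topology; none of the weak-\(^*\) machinery you introduce appears in its proof. Where you diverge is in trying to manufacture the selector yourself, and that is where the proposal has a genuine gap: your final construction rests on the existence of a single countable set \(D \subset \h^*\) such that, for \emph{every} \((t,\omega)\) simultaneously, \(\u_t(\omega)^*(D)\) is \(\|\cdot\|_H\)-dense in \(\u_t(\omega)^*(\h^*)\). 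Since \(\h^*\) need not be norm-separable (only \(\H\) is assumed separable), this is precisely the crux, and ``rich enough \(\ldots\) to realize arbitrarily close approximations'' asserts rather than proves it. The claim is in fact true and the argument can be repaired: take \(D_0\) countable and weak-\(^*\) dense in every closed ball of \(\h^*\) (possible because bounded sets of \(\h^*\) are weak-\(^*\) metrizable and separable when \(\H\) is separable) and let \(D\) be the set of rational convex combinations of elements of \(D_0\); for fixed \((t,\omega)\) and \(z^*\) one has \(\u_t(\omega)^* d_j \to \u_t(\omega)^* z^*\) weakly in \(H\) along a bounded sequence \(d_j \in D_0\), and Mazur's lemma upgrades this to norm approximation by elements of \(\u_t(\omega)^*(D)\). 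Without this (or an equivalent) step the greedy selection is unjustified.

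Two smaller points. The obstacle you cite for the Kuratowski--Ryll-Nardzewski route is not the right one: \(y^* \mapsto \|\u_t(\omega)^* y^* - \psi_t(\omega)x\|_H\) is a supremum of weak-\(^*\) continuous affine functions, hence weak-\(^*\) lower semicontinuous, so the sets \(\Gamma^{m,N}(t,\omega)\) \emph{are} weak-\(^*\) closed and compact. What actually breaks down there is the measurability of the multifunction and of your level \(N^\star\): since \(H\) is only lower semicontinuous and not weak-\(^*\) continuous in \(y^*\), infima over the weak-\(^*\) compact balls cannot be evaluated along a countable weak-\(^*\) dense subset, so \(\{N^\star \le N\}\) is not obviously progressively measurable. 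Finally, all of this is avoided by simply quoting \cite[Proposition~8.8]{ondrejat-diss} as the paper does, or by carrying out the Mazur argument above explicitly; you should do one of the two.
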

	\begin{proof}
	   We verify the prerequisites of \cite[Proposition 8.8]{ondrejat-diss}: The process \(H(\cdot, \cdot, x , y^*)\) is progressively measurable by \Cref{lem:  inte}. It is clear that \(y^* \mapsto H (t, \omega, x, y^*)\) is continuous (when \(\h^*\) is equipped with the (operator) norm topology). Finally, we show that \(\{y^* \in \h^* \colon H(t, \omega, x, y^*) < 1/m\} \not = \emptyset\) for every \(m \in \mathbb{N}\). Fix \((t, \omega) \in \mathbb{R}_+ \times \mathbb{C}\) and note that
	   \[
	   \psi_t (\omega) (H) \subseteq \on{ker}(\u_t (\omega) )^\perp = \overline{\u_t(\omega) ^* (\h^*)} \subseteq H,\]  
	   cf. \cite[Satz III.4.5]{werner11}. Thus, there exists a sequence \((y^*_m)_{m \in \mathbb{N}} \subset \h^*\) such that 
	   \[
	   \lim_{m \to \infty}\|\u_t (\omega)^* y^*_m - \psi_t (\omega) x\|_H = 0.
	   \]
	   We conclude that \(\{y^* \in \h^* \colon H(t, \omega, x, y^*) < 1/m\} \not = \emptyset\) for every \(m \in \mathbb{N}\). 
	   In summary, the claim follows from \cite[Proposition 8.8]{ondrejat-diss}.
	\end{proof}
	
	Fix \(T > 0\) and \(x \in H\) and let \((\s^m)_{m \in \mathbb{N}}\) be as in \Cref{lem: H}. Then, \Cref{prop: conv ondre} in \Cref{app: TT} yields that
	\begin{align*}
	\sup_{t \in [0, T]} \Big| \int_0^t \langle \psi_s (X) x, d W_s \rangle_{H} &- \int_0^t  \langle \sigma_s (X)^* \s^m_s (X), d W_s \rangle_{H} \Big| \to 0
	\end{align*}
	in probability as \(m \to \infty\). 
	Define \(Z = \{Z (y^*) \colon y^* \in \H^*\}\) by
	\[
	Z (y^*) \triangleq \int_0^\cdot \langle \u_t (X)^* y^*, d W_t\rangle_{H}, \quad y^* \in \H^*.
	\]
	Since
	\[
	\int_0^\cdot \langle \sigma_s (X)^* \s^m_s (X), d W_s\rangle_H = \int_0^\cdot \langle d Z_s, \s^m_s (X) \rangle_U
	\]
	by \Cref{prop: simple chain rule} in \Cref{app: TT}, the construction of the stochastic integral implies that
	the law of \(\int_0^\cdot \langle \sigma_s (X)^* \s^m_s (X), d W_s\rangle_{H}\) is determined by the finite dimensional distribution of \((X, Z)\). Thus, also the law of \(\int_0^\cdot \langle \psi_t (X) e_k, d W_s\rangle_{H}\) is determined by the finite dimensional distributions of \((X, Z)\). 
	
		\begin{lemma}\label{lem: U X-mb} 
		For every (finite) random time \(T\colon \C \to \mathbb{R}_+\) and \(y^* \in D(A^*)\) there exists a measurable map \(F \colon \C \to \overline{\mathbb{R}}\) such that a.s. \(Z_{T (X)} (y^*) = F(X)\).
	\end{lemma}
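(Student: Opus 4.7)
The plan is to eliminate the stochastic integral in \eqref{eq: main eq SPDE}. Since \(y^* \in D(A^*)\), the defining equation of a weak solution gives, \(\P\)-a.s. and simultaneously for all \(t \geq 0\) by continuity of both sides,
\begin{equation*}
Z_t(y^*) = \langle X_t, y^*\rangle_\H - \langle x_0, y^*\rangle_\H - \int_0^t \langle X_s, A^* y^*\rangle_\H\, ds - \int_0^t \langle \v_s(X), y^*\rangle_\H\, ds,
\end{equation*}
so the task reduces to showing that the right-hand side is a measurable functional of the path \(X\) and that the dependence is robust under evaluation at the random time \(T(X)\).

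I would then verify measurability term by term on \((\C, \mathcal{C})\): the evaluation \(\omega \mapsto \langle \omega_t, y^*\rangle_\H\) is \(\mathcal{C}_t\)-measurable; the integrand \((s, \omega) \mapsto \langle \omega_s, A^* y^*\rangle_\H\) is jointly Borel because the coordinate process is progressively measurable on \((\C, \mathcal{C}, \bC)\); and \((s, \omega) \mapsto \langle \v_s(\omega), y^*\rangle_\H\) is jointly Borel by the progressive measurability of \(\v\). Tonelli's theorem then makes the two time integrals jointly Borel functions of \((\omega, t) \in \C \times \mathbb{R}_+\) with values in \(\overline{\mathbb{R}}\), and they are finite on a set of full \(\P \circ X^{-1}\)-measure by \eqref{eq: sol inte}. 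I would set
\begin{equation*}
G(\omega, t) := \langle \omega_t, y^*\rangle_\H - \langle x_0, y^*\rangle_\H - \int_0^t \langle \omega_s, A^* y^*\rangle_\H\, ds - \int_0^t \langle \v_s(\omega), y^*\rangle_\H\, ds
\end{equation*}
whenever both integrals are finite and \(G(\omega, t) := +\infty\) otherwise, so that \(G\) is jointly Borel. The desired \(F\) is then defined by \(F(\omega) := G(\omega, T(\omega))\), which is \(\mathcal{C}\)-measurable because \(T \colon \C \to \mathbb{R}_+\) is, and the displayed a.s. identity yields \(Z_{T(X)}(y^*) = F(X)\) \(\P\)-a.s.

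The main (and essentially the only) obstacle is the bookkeeping around the null set on which the two Lebesgue integrals could fail to converge for some paths in \(\C\); this is exactly what forces the statement to allow \(F\) to take values in \(\overline{\mathbb{R}}\) rather than \(\mathbb{R}\).
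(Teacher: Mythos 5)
Your proposal is correct and follows exactly the paper's route: the paper also defines \(F(\omega)\) as the right-hand side of the solution identity evaluated at \(T(\omega)\), set to \(+\infty\) when the integral diverges, and concludes from the definition of a weak solution. Your additional term-by-term measurability verification is a welcome elaboration but not a different argument.
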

	\begin{proof}
	We define 
	\begin{align*}
	F(\omega) \triangleq \langle \omega (T(\omega)), y^*\rangle_{\H} - \langle x_0, y^*\rangle_{\H} &- \int_0^{T(\omega)} \langle \omega (s), A^* y^* \rangle_{\H} ds - \int_0^{T(\omega)} \langle \v_s (\omega), y^* \rangle_{\H} ds, 
	\end{align*}
set to be \(+ \infty\) if the last integral diverges. The claim now follows from the definition of a weak solution.
	\end{proof}
	\Cref{lem: U X-mb} shows that the finite dimensional distributions of \(\{Z (y^*) \colon y^* \in D(A^*)\}\) are determined by the law of \(X\). We now adapt an argument from the proof of \cite[Lemma 4.1]{kunze13} to extend this observation to \(\{Z (y^*) \colon y^* \in U^*\}\). Define the localizing sequence
	\[
	T_m \triangleq \inf \Big( t \in \mathbb{R}_+ \colon \int_0^t \|\u_s (X) \|^2_L ds \geq m\Big), \quad m \in \mathbb{N}.
	\]
	Recall that \(D(A^*)\) is assumed to be sequentially weak\(^*\) dense. Thus, for every \(y^* \in \H^*\) there exists a sequence \((y_k^*)_{k \in \mathbb{N}} \subset D(A^*)\) such that \(y^*_k \to y^*\) in the weak\(^*\) topology. 
	Fix \(T > 0\) and \(m > 0\) and denote the Lebesgue measure on \([0, T]\) by \(\llambda\). As \((y^*_k)_{k \in \mathbb{N}}\) is bounded by the uniform boundedness principle, the dominated convergence theorem yields that 
	\begin{align*}
	\lim_{k \to \infty} \E \Big[ \int_0^{T \wedge T_m} \langle h (s), \u_s (X)^* y^*_k \rangle_H ds \Big]
	&= \E \Big[ \int_0^{T \wedge T_m} \langle h (s), \u_s (X)^* y^* \rangle_H ds \Big]
	\end{align*}
	for every \(h \in L^2 (\P \otimes \llambda, H)\).
	This means that
	\begin{align*}
	\u (X)^* y^*_k \1_{[0,  T_m]} \to \u (X)^* y^* \1_{[0, T_m]}
	\end{align*} 
	weakly in \(L^2 (\P \otimes \llambda, H)\) as \(k \to \infty\). By Mazur's lemma (\cite[Korollar~III.3.9]{werner11}), there exists a sequence \((x^*_k)_{k \in \mathbb{N}}\) in the convex hull of \((y^*_k)_{k \in \mathbb{N}}\) (and thus in \(D(A^*)\)) such that 
		\begin{align*}
	\u (X)^* x^*_k \1_{[0, T_m]} \to \u (X)^* y^* \1_{[0, T_m]} 
	\end{align*} 
	strongly in \(L^2(\P \otimes \llambda, H)\) as \(k \to \infty\). 
	Hence, \Cref{prop: conv ondre} in \Cref{app: TT} yields that
	\[
	\sup_{s \in [0, T]} \big| Z_{s \wedge T_m} (x^*_k) - Z_{s \wedge T_m} (y^*) \big| \to 0
	\]
	in probability as \(k \to \infty\).
	Finally, we conclude from \Cref{lem: U X-mb} that the finite dimensional distributions of \((X, Z)\) are determined by the law of \(X\).
	
In summary, the law of \((X, W)\) is determined by the law of \((X, V)\) and hence, by \Cref{lem: main1}, it is determined by the law of \(X\). The proof is complete.
\qed
\\

It remains to prove \Cref{lem: main1}:
\\

\noindent
\emph{Proof of \Cref{lem: main1}:}
\emph{Step 1.} 
Recall that each \(V^k\) is a continuous local martingale by the definition of the stochastic integral.
		Denote the quadratic variation process by \([\cdot, \cdot]\).
For \(i, j \in\mathbb{N}\) and \(t \in \mathbb{R}_+\), using \Cref{prop: QV indep BM} in \Cref{app: TT} and the self-adjointness of \(\phi\) and \(\psi\), we obtain
\begin{align*}
    [V^i, V^j]_t &= \Big[ \int_0^\cdot \langle \phi_s (X) e_i, d W_s \rangle_H , \int_0^\cdot \langle \phi_s (X) e_j, d W_s \rangle_H\Big]_t \\&\qquad\qquad+ \Big[ \int_0^\cdot \langle \psi_s (X) e_i, d B_s \rangle_H, \int_0^\cdot \langle \psi_s (X) e_j, d B_s \rangle_H \Big]_t
    \\&= \int_0^t \langle \phi_s (X) e_i, \phi_s (X) e_j\rangle_H ds + \int_0^t \langle \psi_s (X) e_i, \psi_s (X) e_j\rangle_H ds
    \\&= \int_0^t \langle (\phi_s (X) + \psi_s (X) ) e_i, e_j \rangle_H ds = t \1_{\{i = j\}}.
\end{align*}
L\'evy's characterization implies that \(V\) is a standard \(\mathbb{R}^\infty\)-Brownian motion.\\\quad\\
\noindent
\emph{Step 2.} In this step we prepare the proof of the independence of \(V\), or more precisely \(\overline{V}\), and \(X\). Let \(C^2_b (\mathbb{R})\) be the set of bounded twice continuously differentiable functions with bounded first and second derivative.
\begin{lemma}\label{lem: main3}
	Let \(\overline{Y}\) be a continuous adapted \(\overline{H}\)-valued process starting at \(\overline{Y}_0 = 0_{\overline{H}}\). For \(h \in \overline{H}\) set \(\overline{Y} (h) \triangleq \langle \overline{Y}, h\rangle_{\overline{H}}\).
	The following are equivalent: 
	\begin{enumerate} \item[\textup{(i)}] \(\overline{Y}\) is a trace class Brownian motion with covariance \(J J^*\).
		\item[\textup{(ii)}] For all \(f \in C^2_b(\mathbb{R})\) with \(\inf_{x \in \mathbb{R}} f(x) > 0\) and \(f (0) = 1\), and all \(h \in \overline{H}\) the process
		\begin{align}\label{eq: Mf}
		M^f \triangleq f(\overline{Y}(h) )\exp \Big( - \frac{\langle J J^* h, h \rangle_{\overline{H}}}{2} \int_0^\cdot \frac{f''(\overline{Y}_s (h)) ds}{f (\overline{Y}_s (h))} \Big)
		\end{align}
		is a martingale.
	\end{enumerate}
\end{lemma}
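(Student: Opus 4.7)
For (i)~$\Rightarrow$~(ii), if $\overline{Y}$ is a trace class Brownian motion with covariance $JJ^*$, then for each $h \in \overline{H}$ the real process $\overline{Y}(h)$ is a one-dimensional Brownian motion with quadratic variation $\langle JJ^*h, h\rangle_{\overline{H}} \cdot t$. Writing $M^f = f(\overline{Y}(h)) \cdot N$ with $N$ the finite-variation exponential factor appearing in \eqref{eq: Mf}, the It\^o correction $\tfrac{1}{2} f''(\overline{Y}(h))\, d[\overline{Y}(h)]$ cancels by construction with the drift of $N$, leaving $dM^f = N f'(\overline{Y}(h))\, d\overline{Y}(h)$. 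Since $\inf f > 0$ and $f''/f$ is bounded, $N$ is bounded on every compact interval, and together with the boundedness of $f'$ this promotes the resulting local martingale to a genuine martingale.

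For the nontrivial direction (ii)~$\Rightarrow$~(i) I plan to perturb $f$ around the constant function $1$ and extract the classical martingale problem for a one-dimensional Brownian motion. Fix $h \in \overline{H}$, set $c \triangleq \langle JJ^*h, h\rangle_{\overline{H}}$, and for $g \in C^2_b(\mathbb{R})$ with $g(0) = 0$ and sufficiently small $\epsilon > 0$ take $f_\epsilon \triangleq 1 + \epsilon g$, which satisfies the hypotheses of (ii). A first-order Taylor expansion of both factors of $M^{f_\epsilon}$ gives
\[
M^{f_\epsilon}_t = 1 + \epsilon \Big( g(\overline{Y}_t(h)) - \tfrac{c}{2} \int_0^t g''(\overline{Y}_s(h))\, ds \Big) + R^\epsilon_t,
\]
with $\|R^\epsilon_t\|_\infty \leq C_T\, \epsilon^2$ on $[0,T]$, using the boundedness of $g, g''$ and the uniform control of the exponential factor in $M^{f_\epsilon}$ on compact intervals. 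Dividing by $\epsilon$ and sending $\epsilon \to 0$ in the martingale identity $\E[M^{f_\epsilon}_t \mid \mathcal{F}_s] = M^{f_\epsilon}_s$ (justified by dominated convergence thanks to the uniform bound) shows that $g(\overline{Y}_t(h)) - (c/2)\int_0^t g''(\overline{Y}_s(h))\, ds$ is a martingale for every such $g$.

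Applying this martingale problem to $C^2_b$-truncations of $x \mapsto x$ and $x \mapsto x^2$, and localizing with $\tau_n \triangleq \inf\{t \colon |\overline{Y}_t(h)| > n\}$, yields that $\overline{Y}(h)$ is a continuous local martingale with $[\overline{Y}(h)]_t = c\, t$, so by L\'evy's characterization it is a Brownian motion with variance $c$. To upgrade from this to the trace class property, I would use the linearity $h \mapsto \overline{Y}(h)$: the process $\sum_j \lambda_j \overline{Y}(h_j) = \overline{Y}(\sum_j \lambda_j h_j)$ is again a one-dimensional Brownian motion for every finite collection, so $\{\overline{Y}_t(h)\colon h \in \overline{H}\}$ is jointly Gaussian with covariance $t\langle JJ^*h_1, h_2\rangle_{\overline{H}}$ by polarization. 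Computing the conditional characteristic function of an increment vector $(\overline{Y}_t(h_j) - \overline{Y}_s(h_j))_j$ given $\mathcal{F}_s$ via the scalar Brownian motion $\overline{Y}(\sum_j \lambda_j h_j)$ shows that it coincides with the unconditional one, yielding joint independence of increments and identifying $\overline{Y}$ as a trace class Brownian motion with covariance $JJ^*$.

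The main obstacle is the $\epsilon \to 0$ step for (ii)~$\Rightarrow$~(i): the uniform $O(\epsilon^2)$ control of $R^\epsilon$ and the $L^1$-domination of $M^{f_\epsilon}$ on compacts that license the interchange of limit and conditional expectation. The remaining ingredients (It\^o's formula, L\'evy's characterization, polarization) are routine once this perturbation step is in hand.
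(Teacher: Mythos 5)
Your argument is correct and follows the same logical skeleton as the paper's proof: both reduce the lemma to the claim that (ii) is equivalent to \(\overline{Y}(h)\) being a one-dimensional Brownian motion with variance \(\langle JJ^*h,h\rangle_{\overline{H}}\) for every \(h\in\overline{H}\), and then assemble these scalar Brownian motions into the \(\overline{H}\)-valued one. The difference is one of presentation: the paper disposes of the two nontrivial ingredients by citation --- the equivalence of the multiplicative martingale problem \eqref{eq: Mf} with the additive one is \cite[Proposition~4.3.3]{EK}, and the one-dimensional martingale characterization of Brownian motion is \cite[Theorem~4.1.1]{SV} --- whereas you prove both from scratch. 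Your linearization \(f_\epsilon = 1+\epsilon g\), with the uniform \(O(\epsilon^2)\) remainder licensing the interchange of the limit \(\epsilon\to 0\) with the conditional expectation, is an elementary and valid substitute for the Ethier--Kurtz result; it correctly stays inside the test class of (ii), since \(f_\epsilon(0)=1\) forces \(g(0)=0\) and constants can be added back afterwards because they do not affect \(g''\). The truncation/localization step, L\'evy's characterization, and the polarization/characteristic-function argument for joint Gaussianity and independence of increments are exactly the (standard) content of the two citations and of the paper's unelaborated closing sentence ``this yields the claim,'' so nothing is missing from your outline.
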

\begin{proof}
	By the classical martingale problem for (one dimensional) Brownian motion (see, e.g. \cite[Theorem 4.1.1]{SV}) and \cite[Proposition 4.3.3]{EK}, (ii) holds if and only if \(\overline{Y} (h)\) is a one dimensional Brownian motion with covariance \(\langle J J^* h , h \rangle_{\overline{H}}\) for all \(h \in \overline{H}\). This yields the claim.
\end{proof}

For \(f = g(\langle \cdot, y^*\rangle_\H)\) with \(g \in C^2 (\mathbb{R})\) and \(y^* \in D(A^*)\) we define 
\begin{align*}
\mathcal{L} f (\X, t) \triangleq g'(\langle \X_t, y^* \rangle_\H) \big( \langle \X_t&, A^* y^*\rangle_\H +  \langle \v_t (\X), y^*\rangle_\H \big) \\&+ \tfrac{1}{2} g'' (\langle \X_t, y^*\rangle_\H) \langle \u_t (\X)^* y^*,\u_t (\X)^* y^*\rangle_{H}.
\end{align*}
Furthermore, we set 
\[
\mathfrak{X} \triangleq \big\{f = g(\langle \cdot, y^* \rangle_\H) \colon g \in C^2(\mathbb{R}), y^* \in D(A^*) \big\}.
\]
The following is a version of \cite[Theorem 3.6]{kunze13} for our framework.
\begin{lemma}\label{lem: main2}
	A probability measure \(\Q'\) on \((\C, \mathcal{C}, \bC)\) is the law of a solution process to the SPDE \eqref{eq: SPDE} if and only if \(\Q'\)-a.s. \(\X_0 = x_0\) and 
	\[
	\int_0^t \|\v_s(\X)\|_\H ds + \int_0^t \|\u_s(\X)\|_{L}^2 ds < \infty, \quad t \in \mathbb{R}_+,
	\]
	and for all \(f \in \mathfrak{X}\) the process
	\begin{align}\label{eq: Kf}
	K^f \triangleq f(\X) - f(x_0) - \int_0^\cdot \mathcal{L} f(\X, s) ds
	\end{align}
	is a local \((\bC^{\Q'}, \Q')\)-martingale, where \(\bC^{\Q'}\) denotes the \(\Q'\)-completion of \(\bC\). Furthermore, for every solution process \(X\) to the SPDE \eqref{eq: SPDE} the process \(K^f \circ X\) is a local martingale on the corresponding driving system.
\end{lemma}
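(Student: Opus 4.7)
The ``only if'' direction is a direct application of the one-dimensional It\^o formula. Let $(\B, W, X)$ be a weak solution, fix $f = g(\langle \cdot, y^*\rangle_\H) \in \mathfrak{X}$, and read off the semimartingale decomposition of the real-valued process $\langle X, y^*\rangle_\H$ from \eqref{eq: main eq SPDE}: the drift is $\int_0^\cdot (\langle X_s, A^*y^*\rangle_\H + \langle \v_s(X), y^*\rangle_\H)\,ds$ and the local martingale part is $\int_0^\cdot \langle \u_s(X)^* y^*, dW_s\rangle_H$, whose quadratic variation equals $\int_0^\cdot \|\u_s(X)^* y^*\|_H^2\,ds$ by the It\^o isometry. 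The one-dimensional It\^o formula applied to $g$ recombines the drift and covariation terms into exactly $\int_0^\cdot \mathcal{L} f(X, s)\,ds$, so $K^f \circ X$ is a local martingale on the driving system. This proves the last sentence of the lemma, and pushing forward to $(\C, \bC^{\Q'}, \Q')$ with $\Q' \triangleq \P \circ X^{-1}$ preserves the local martingale property, yielding the ``only if'' direction.

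For the converse, assume that $\Q'$ solves the martingale problem. Specialising $g$ to the identity inside $\mathfrak{X}$ yields that, for every $y^* \in D(A^*)$, the process
\[
N^{y^*} \triangleq \langle \X, y^*\rangle_\H - \langle x_0, y^*\rangle_\H - \int_0^\cdot \bigl(\langle \X_s, A^*y^*\rangle_\H + \langle \v_s(\X), y^*\rangle_\H\bigr)\,ds
\]
is a continuous local $(\bC^{\Q'}, \Q')$-martingale vanishing at zero. Taking then $g(x) = x^2$ and comparing via integration by parts with the identity-case local martingale $N^{y^*}$, the remaining drift $\int_0^\cdot \|\u_s(\X)^* y^*\|_H^2\,ds$ inside $\mathcal{L}f$ is identified as the predictable quadratic variation of $N^{y^*}$. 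Polarising in $y^*_1, y^*_2 \in D(A^*)$ gives
\[
\langle N^{y^*_1}, N^{y^*_2}\rangle_\cdot = \int_0^\cdot \langle \u_s(\X)^* y^*_1, \u_s(\X)^* y^*_2\rangle_H\,ds,
\]
which is the covariance structure expected from the stochastic integral on the right of \eqref{eq: main eq SPDE}.

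The technical core, and the main expected obstacle, is to reconstruct a standard $\mathbb{R}^\infty$-Brownian motion $W$ such that $\int_0^\cdot \langle \u_s(\X)^* y^*, dW_s\rangle_H = N^{y^*}$ for every $y^* \in D(A^*)$. Mimicking the construction used in the proof of \Cref{theo: main1}, I would enlarge $(\C, \mathcal{C}^{\Q'}, \bC^{\Q'}, \Q')$ by an independent factor carrying a standard $\mathbb{R}^\infty$-BM $B = (\beta^{o,k})_{k \in \mathbb{N}}$ and set
\[
W^k \triangleq M^k + \int_0^\cdot \langle \phi_s(\X) e_k, dB_s\rangle_H, \quad k \in \mathbb{N},
\]
where $M^k$ is a Kunita--Watanabe-type integral of $\psi_\cdot(\X) e_k$ against the orthogonal family $\{N^{y^*}\}_{y^* \in D(A^*)}$, characterised by the covariance identity $\langle M^k, N^{y^*}\rangle_\cdot = \int_0^\cdot \langle \u_s(\X)^* y^*, e_k\rangle_H\,ds$. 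An approximation in the spirit of \Cref{lem: H} is needed to extend this identity from $D(A^*)$ to all of $\H^*$ and so obtain $\int_0^\cdot \langle \u_s(\X)^* y^*, dW_s\rangle_H = N^{y^*}$ as required. The covariance formulas above, combined with \eqref{eq: prop phi psi}, \Cref{prop: QV indep BM} and L\'evy's characterisation, yield $[W^i, W^j]_t = t\,\1_{\{i = j\}}$, hence $W$ is a standard $\mathbb{R}^\infty$-BM and the coordinate process solves \eqref{eq: main eq SPDE} on the enlarged driving system with driver $W$; its law on $(\C, \mathcal{C})$ is $\Q'$ by construction. The rigorous construction of $M^k$ by martingale representation on an extended space is precisely the content of \cite[Theorem 3.6]{kunze13} transferred to our non-Markovian framework.
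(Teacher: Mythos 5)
Your proof follows the same skeleton as the paper's: It\^o's formula for the \emph{only if} direction, and for the converse the specialisations \(g(x)=x\) and \(g(x)=x^2\) to identify the local martingale \(N^{y^*}\) (the paper's \(\Y(y^*)\) in \eqref{eq: Y(y)}) together with its quadratic (co)variation, followed by a reconstruction of the driving noise on an extension. The one substantive difference is how that reconstruction is justified. The paper disposes of it in a single stroke: since \(D(A^*)\) is weak\(^*\) dense in \(\H^*\) it separates points of \(\H\), so \cite[Theorem 3.1]{ondrejat07} --- an integral representation theorem for cylindrical local martingales in separable Banach spaces --- directly yields, on a possible extension, a standard \(\mathbb{R}^\infty\)-Brownian motion \(W\) with \(N^{y^*}=\int_0^\cdot\langle\u_s(\X)^*y^*,dW_s\rangle_H\). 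You instead sketch the internals of such a representation theorem by hand (projections \(\phi,\psi\), an independent auxiliary Brownian motion, Kunita--Watanabe-type integrals \(M^k\) characterised by their brackets against \(N^{y^*}\), an approximation in the spirit of \Cref{lem: H}); the covariance bookkeeping you give is consistent, but you then defer the rigorous construction of \(M^k\) to ``\cite[Theorem 3.6]{kunze13} transferred to our non-Markovian framework'' --- which is circular, since \Cref{lem: main2} \emph{is} precisely that transfer, and the statement you actually need is the martingale representation result of Ondrej\'at, not Kunze's martingale problem theorem. Replacing that citation by \cite[Theorem 3.1]{ondrejat07} (and noting explicitly where the weak\(^*\) density of \(D(A^*)\) enters, namely to guarantee that the test functionals separate points) closes the argument. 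One further small point: in the \emph{only if} direction, transporting the local martingale property of \(K^f\circ X\) from the driving system to \((\C,\mathcal{C}^{\Q'},\bC^{\Q'},\Q')\) is not entirely automatic, because a localising sequence on the original space need not be adapted to the image filtration; the paper invokes \cite[Remark 10.40]{J79} for exactly this step.
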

\begin{proof}
	The structure of the proof is classical and similar to the finite dimensional case (see, e.g. \cite[Chapter 5.4]{KaraShre}). 
Let \(\Q'\) be a solution measure to the SPDE \eqref{eq: SPDE} and let \((\B, W, X)\) be a weak solution such that \(\B=(\Omega, \mathcal{F}, (\mathcal{F}_t)_{t \geq 0}, \P)\) and \(\Q' = \P \circ X^{-1}\). Take \(f = g(\langle \cdot, y^*\rangle_\H) \in \mathfrak{X}\).
Then, It\^o's formula yields that 
\begin{align}\label{eq: ito Kf}
K^f \circ X = \int_0^\cdot g'(\langle X_s, y^* \rangle_\H) d\hspace{0.02cm} \Big(\int_0^s \langle \u_u(X)^* y^*, dW_u\rangle_H\Big).
\end{align}
Hence, \(K^f \circ X\) is a local martingale. Due to \cite[Remark 10.40]{J79}, the local martingale property transfers to the canonical space \((\C, \mathcal{C}^{\Q'}, \bC^{\Q'}, \Q')\) and the \emph{only if} implication follows. 

Conversely, let \(\Q'\) be as in the statement of the lemma. Then, using the hypothesis with \(g(x) = x\) and \(g(x) = x^2\) and similar arguments as in the proof of \cite[Proposition~5.4.6]{KaraShre}, for every \(y^* \in D(A^*)\) it follows that
\begin{align}\label{eq: Y(y)}
\Y (y^*) \triangleq \langle \X, y^*\rangle_\H - \langle \X_0, y^*\rangle_\H - \int_0^\cdot \langle \X_s, A^* y^*\rangle_\H ds -\int_0^\cdot \langle \v_s (\X), y^*\rangle_\H ds
\end{align}
is a local \((\bC^{\Q'}, \Q')\)-martingale with quadratic variation \[[ \Y (y^*), \Y (y^*) ] = \int_0^\cdot \langle \u_s (\X)^* y^*, \u_s(\X)^* y^*\rangle_{H} ds.\]
As \(D(A^*)\) is supposed to be weak\(^*\) dense in \(\H^*\), it separates points of \(\H\). Thus, we deduce from \cite[Theorem 3.1]{ondrejat07} that, possibly on an extension of the filtered probability space \((\C, \mathcal{C}^{\Q'}, \bC^{\Q'}, \Q')\), there exists a standard \(\mathbb{R}^\infty\)-Brownian motion \(W\) such that 
\[
\Y(y^*) = \int_0^\cdot \langle \u_s (\X)^* y^*, d W_s\rangle_H, \quad y^* \in D(A^*).
\]
Due to  \eqref{eq: Y(y)}, we conclude the \emph{if} implication. The proof is complete.
\end{proof}
Define \(M^f\) and \(K^g\) as in \eqref{eq: Mf} and \eqref{eq: Kf} with \(\overline{Y}\) replaced by \(\overline{V}\) and \(\X\) replaced by \(X\). It follows from \Cref{lem: main3} and Step 1 that \(M^f\) is a martingale. Similarly, because \(X\) is a solution process to the SPDE \eqref{eq: SPDE}, \(K^g\) is a local martingale by \Cref{lem: main2}.
We now show that \([M^f, K^g] = 0\).
It\^o's formula yields that
\begin{align}\label{eq: Ito Mf}
dM^f_t = \exp \Big( - \frac{\langle J J^* h, h \rangle_{\overline{H}}}{2} \int_0^t \frac{f'' (\overline{V}_s (h)) ds}{f(\overline{V}_s (h))} \Big) f'(\overline{V}_t (h)) d  \overline{V}_t (h).
\end{align}
Using \Cref{prop: QV indep BM} in \Cref{app: TT}, we deduce from \eqref{eq: prop phi psi} that
\begin{align*}
    \Big[ \overline{V}(h), \int_0^{\cdot} \langle \u_s(X)^* y^*, d W_s\rangle_{H}\Big] &= \Big[ \int_0^\cdot \langle \phi_t (X) J^*h, d W_s \rangle_{H}, \int_0^\cdot \langle \u_s (X)^* y^*, d W_s \rangle_{H} \Big] \\&= \int_0^\cdot \langle \u_t (X) \phi_t (X) J^* h,y^* \rangle_{\H} ds = 0.
\end{align*}
In view of \eqref{eq: ito Kf} and \eqref{eq: Ito Mf}, we conclude that \([M^f, K^g] = 0\). \\\quad\\
\noindent
\emph{Step 3:} We are in the position to follow the proof of \cite[Theorem 2.3]{criens20}. 
More precisely, we deduce the independence of \(V\) and \(X\) from \([M^f, K^g] = 0\). 
For \(n \in \mathbb{N}\) set
\[
T_n \triangleq \inf (t \in \mathbb{R}_+ \colon |K^g_t| \geq n), \qquad K^{g, n} \triangleq K^g_{\cdot \wedge T_n}.
\]
As \(K^g\) has continuous paths, \(K^{g, n}\) is bounded on bounded time intervals and consequently, \(K^{g, n}\) is a martingale. Step 2 yields that \([M^{f}, K^{g, n}] = [M^f, K^g]_{\cdot \wedge T_n} = 0\). Hence, by integration by parts, the process \(M^f K^{g, n}\) is a local martingale and a true martingale, because it is bounded on bounded time intervals.
Next, fix a bounded stopping time \(S\) and
define a measure \(\Q'\) on \((\Omega, \mathcal{F})\) as follows:
\[
\Q' (G) \triangleq \E^\P \big[ M^f_S \1_G \big], \qquad G \in \mathcal{F}.
\]
As \(M^f_0 = 1\), the optional stopping theorem shows that \(\Q'\) is a probability measure.
Since \(M^f, K^{g, n}\) and \(M^f K^{g, n}\) are \(\P\)-martingales, we deduce again from the optional stopping theorem that for every bounded stopping time \(T\) 
\begin{align*}
\E^{\Q'} \big[ K^{g, n}_T \big] &= \E^\P \big[M^f_S K^{g, n}_T\big]
\\&= \E^\P \big[ M^f_S \1_{\{S \leq T\}} \E^\P\big[K^{g, n}_T | \mathcal{F}_{S \wedge T} \big] + K^{g, n}_T \1_{\{T < S\}} \E^\P\big[M^f_S | \mathcal{F}_{S \wedge T} \big]\big]
\\&= \E^\P \big[ M^f_SK^{g, n}_{S \wedge T}\1_{\{S \leq T\}} + K^{g, n}_T M^f_{S \wedge T}\1_{\{T < S\}}\big]
\\&= \E^\P \big[ M^f_{S \wedge T} K^{g, n}_{S \wedge T} \big] =
 0. 
\end{align*}
Thus, because \(T\) was arbitrary and \(T_n \nearrow \infty\) as \(n \to \infty\), \(K^{g}\) is a local \(\Q'\)-martingale. As \(g\) was arbitrary, we deduce from \Cref{lem: main2} and \cite[Remark~10.40]{J79} that \(\Q' \circ X^{-1}\) is a solution measure to the SPDE \eqref{eq: SPDE}. The weak uniqueness assumption now implies that \(\P \circ X^{-1} = \Q' \circ X^{-1}\). 
Next, fix a set \(F \in \sigma (X_t, t \in \mathbb{R}_+)\) such that \(\P (F) > 0\) and set 
\[
\Q^* (G) \triangleq \frac{\P(G, F)}{\P(F)},\quad G \in \mathcal{F}.
\]
Clearly, \(\Q^*\) is a probability measure on \((\Omega, \mathcal{F})\).
Recalling \(\P(F) = \Q'(F)\), we obtain 
\[
\E^{\Q^*} \big[ M^f_S \big] = \frac{\Q'(F)}{\P(F)} =  1.
\]
Thus, because \(S\) was arbitrary, \(M^f\) is a \(\Q^*\)-martingale. 
Since \(f\) was arbitrary, \Cref{lem: main3} yields that \(\overline{V}\) is a trace class \(\Q^*\)-Brownian motion with covariance \(J J^*\). Consequently, for every \(G \in \sigma (\overline{V}_t, t \in \mathbb{R}_+)\) we have
\begin{align*}
\P(G, F) = \Q^*(G) \P(F) = \P(G) \P(F).
\end{align*}
Since this equality holds trivially whenever \(F \in \sigma (X_t, t \in \mathbb{R}_+)\) satisfies \(\P(F) = 0\), we conclude that \(V\) and \(X\) are independent. The proof is complete.
\qed

\appendix
\section{Some Facts for Stochastic Integrals} \label{app: TT}
In the following all processes are defined on a fixed filtered probability space (with complete right-continuous filtration). 
Let \(W\) and \(B\) be two standard \(\mathbb{R}^\infty\)-Brownian motions and let \(\phi = (\phi_t)_{t \geq 0}, \psi = (\psi_t)_{t \geq 0}\) and \(\phi^n = (\phi^n_t)_{t \geq 0}\) be \(H\)-valued progressively measurable processes such that a.s.
\[
\int_0^t \big(\|\phi_s\|^2_H + \|\psi_s\|^2_H + \|\phi^n_s\|^2_H \big) ds < \infty, \quad t \in \mathbb{R}_+.
\]
We start with a basic property of stochastic integrals,
which we use throughout the article without further reference. 
Recall that \([\cdot, \cdot]\) denotes the quadratic variation process.
\begin{proposition}
\begin{align*}
    \Big[ \int_0^\cdot \langle \phi_s, d W_s \rangle_H, \int_0^\cdot \langle \psi_s, d W_s \rangle_H \Big] = \int_0^\cdot \langle \phi_s, \psi_s \rangle_H ds.
\end{align*}
\end{proposition}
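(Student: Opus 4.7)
The plan is to verify the identity first for elementary integrands and then to extend it to the general case by approximation, mirroring the construction of the stochastic integral itself. Since both sides of the claimed identity are symmetric bilinear in $(\phi,\psi)$ (the left by bilinearity of quadratic covariation, the right by bilinearity of $\langle\cdot,\cdot\rangle_H$), polarization reduces the statement to the diagonal case $[M_\phi,M_\phi] = \int_0^\cdot \|\phi_s\|_H^2 ds$; however, the diagonal form is not really easier to prove, so I proceed directly with the bilinear statement.

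For simple processes of the form $\phi_s = \sum_{k=1}^m f^k_s x^k$ and $\psi_s = \sum_{j=1}^n g^j_s y^j$ with bounded real-valued progressively measurable coefficients $f^k, g^j$ and vectors $x^k, y^j \in H$, the cylindrical representation of the stochastic integral recalled in the introduction gives
\[
\int_0^\cdot \langle \phi_s, dW_s\rangle_H = \sum_{k=1}^m \int_0^\cdot f^k_s \, dB_s(x^k),
\]
and analogously for $\psi$. The one-dimensional processes $B(x^k)$ and $B(y^j)$ are continuous local martingales (in fact Brownian motions with variance parameters $\|x^k\|_H^2, \|y^j\|_H^2$) and their quadratic covariation is $[B(x^k),B(y^j)]_t = t \langle x^k, y^j\rangle_H$; this follows from $B(x) = \sum_\ell \langle x, e_\ell\rangle_H \beta^\ell$ together with the independence of the $\beta^\ell$, or, equivalently, by polarizing the identity $[B(x),B(x)]_t = t\|x\|_H^2$. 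Bilinearity of the quadratic covariation and the classical Kunita--Watanabe formula for real-valued stochastic integrals then yield
\[
\Big[\int_0^\cdot\!\langle \phi_s, dW_s\rangle_H,\int_0^\cdot\!\langle \psi_s, dW_s\rangle_H\Big] = \sum_{k,j}\int_0^\cdot\! f^k_s g^j_s \langle x^k, y^j\rangle_H\, ds = \int_0^\cdot\!\langle \phi_s, \psi_s\rangle_H\, ds.
\]

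For general $\phi, \psi$ satisfying the standing integrability, choose sequences of simple processes $\phi^n, \psi^n$ with $\int_0^T \|\phi^n_s - \phi_s\|_H^2\, ds + \int_0^T \|\psi^n_s - \psi_s\|_H^2\, ds \to 0$ in probability for every $T > 0$; by construction of the stochastic integral, the corresponding integrals then converge uniformly on compacts in probability. The Kunita--Watanabe inequality propagates this to ucp convergence of the quadratic covariations, while the Cauchy--Schwarz inequality handles convergence on the right-hand side. Passing to the limit gives the identity in general. The only mildly technical point, which is entirely routine, is the localization needed to pass from square-integrable to locally square-integrable integrands, performed via the stopping times $\inf\{t\geq 0\colon \int_0^t(\|\phi_s\|_H^2 + \|\psi_s\|_H^2)\,ds \geq n\}$.
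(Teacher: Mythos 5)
Your proof is correct. Note that the paper states this proposition without any proof at all, presenting it as a basic property of the stochastic integral to be used throughout without further reference; your argument (verification for simple integrands via the cylindrical representation and the real-valued Kunita--Watanabe formula, followed by approximation, the Kunita--Watanabe inequality, and localization) is the standard construction-level justification and correctly supplies what the paper leaves implicit.
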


The following proposition is a direct consequence of \cite[Proposition 4.1]{ondrejat-diss}.
\begin{proposition} \label{prop: conv ondre}
If for some \(T > 0\)
\[
\int_0^T \|\phi_s - \phi^n_s\|^2_H ds \to 0 \text{ as } n \to \infty
\]
in probability, then 
\[
\sup_{t \in [0, T]} \Big| \int_0^t \langle \phi_s, d W_s \rangle_H - \int_0^t \langle \phi^n_s, d W_s \rangle_H \Big| \to 0 \text{ as } n \to \infty
\]
in probability as well.
\end{proposition}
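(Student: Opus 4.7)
The plan is to reduce the statement, by linearity of the stochastic integral, to showing that if $\eta^n \triangleq \phi - \phi^n$ satisfies $\int_0^T \|\eta^n_s\|_H^2 ds \to 0$ in probability, then $\sup_{t \in [0,T]} |\int_0^t \langle \eta^n_s, dW_s\rangle_H| \to 0$ in probability. This is the classical ``stochastic dominated convergence'' property for stochastic integrals and its proof combines a localization argument with Doob's $L^2$ maximal inequality. Since, by the construction recalled in \Cref{sec: main}, $\int_0^\cdot \langle \eta_s, dW_s\rangle_H$ equals $\int_0^\cdot \widetilde{\eta}_s J^{-1} d\overline{W}_s$ as a classical Hilbert-space-valued stochastic integral against the trace-class Brownian motion $\overline{W}$, it inherits the It\^o isometry $\E[(\int_0^t \langle \eta_s, dW_s\rangle_H)^2] = \E \int_0^t \|\eta_s\|_H^2 ds$ (when the right-hand side is finite) together with Doob's inequality.

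The key step is the following localization. Given $\delta, \kappa > 0$, I would introduce the stopping times
\[
\tau_n \triangleq \inf\Big\{t \geq 0 : \int_0^t \|\eta^n_s\|_H^2 ds \geq \kappa\Big\},
\]
so that on $\{\tau_n > T\}$ one has $\int_0^T \|\eta^n_s\|_H^2 ds < \kappa$. The stopped processes $N^n \triangleq \int_0^{\cdot \wedge \tau_n} \langle \eta^n_s, dW_s\rangle_H$ are then continuous square-integrable martingales with quadratic variation satisfying $\E[[N^n]_T] \leq \kappa$, so Doob's maximal inequality yields
\[
\P\Big(\sup_{t \in [0,T]} |N^n_t| > \delta\Big) \leq \frac{4 \kappa}{\delta^2}.
\]

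To conclude, I would use the splitting
\[
\P\Big(\sup_{t \in [0,T]}\Big|\int_0^t \langle \eta^n_s, dW_s\rangle_H\Big| > \delta\Big) \leq \P\Big(\int_0^T \|\eta^n_s\|^2_H ds \geq \kappa\Big) + \P\Big(\sup_{t \in [0,T]} |N^n_t| > \delta\Big),
\]
where on $\{\tau_n > T\}$ the stopped and unstopped integrals coincide. The first term tends to $0$ as $n \to \infty$ by hypothesis and the second is at most $4\kappa/\delta^2$; letting first $n \to \infty$ and then $\kappa \downarrow 0$ finishes the argument. There is no genuine obstacle: the whole content is Doob's inequality combined with the It\^o isometry, and the only point that deserves a brief comment is that both inequalities carry over to the cylindrical setup, which is automatic by the realization of the integral against $\overline{W}$.
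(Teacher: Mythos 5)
Your argument is correct and complete. Note, however, that the paper does not actually prove this proposition: it is stated in Appendix A as ``a direct consequence of [Ondrej\'at, Proposition 4.1]'', so there is no in-paper proof to compare against. What you have written is precisely the standard argument that underlies such a citation: reduce by linearity to $\eta^n = \phi - \phi^n$, localize with $\tau_n = \inf\{t : \int_0^t \|\eta^n_s\|^2_H\, ds \geq \kappa\}$ so that the stopped integral is a square-integrable martingale with $\E[[N^n]_T] \leq \kappa$, apply Doob's $L^2$ maximal inequality together with the It\^o isometry (which transfers to the cylindrical setting exactly because the integral is realized against $\overline{W}$ with $\|\widetilde{\eta}\|_{L_2(H,\mathbb{R})} = \|\eta\|_H$, as recalled in \Cref{sec: main}), and conclude via the splitting over $\{\tau_n \leq T\}$ and its complement, letting $n \to \infty$ first and then $\kappa \downarrow 0$. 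All steps check out; in particular the event inclusion $\{\sup_{t \leq T}|\int_0^t \langle \eta^n_s, dW_s\rangle_H| > \delta\} \subseteq \{\tau_n \leq T\} \cup \{\sup_{t \leq T}|N^n_t| > \delta\}$ is valid since the stopped and unstopped integrals agree on $\{\tau_n > T\}$. Your version has the advantage of making the appendix self-contained, at the cost of a few lines the authors chose to outsource.
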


The next proposition follows from \cite[Proposition 4.7]{rem20}.
\begin{proposition} \label{prop: QV indep BM}
If \(W\) and \(B\) are independent, then 
\[
\Big[ \int_0^\cdot \langle \phi_s, d W_s \rangle_H, \int_0^\cdot \langle \psi_s, d B_s \rangle_H \Big] = 0.
\]
\end{proposition}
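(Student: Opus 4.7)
\emph{Approach.} My plan is to deduce the vanishing quadratic covariation from the quadratic variation formula stated in the previous (unnamed) proposition by combining \(W\) and \(B\) into a single standard \(\mathbb{R}^\infty\)-Brownian motion and applying a polarization identity.

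Since the components \((\beta^k)_{k \in \mathbb{N}}\) of \(W\) and \((\tilde\beta^k)_{k \in \mathbb{N}}\) of \(B\) form, by the independence hypothesis, a jointly independent family of one-dimensional standard Brownian motions, any re-enumeration of \((\beta^1, \tilde\beta^1, \beta^2, \tilde\beta^2, \ldots)\) is again a standard \(\mathbb{R}^\infty\)-Brownian motion \(\tilde W\) on the same filtered space. Viewed through the cylindrical picture, \(\tilde W\) corresponds to a cylindrical Brownian motion on the product Hilbert space \(H \oplus H\) with orthonormal basis \(\{(e_k, 0), (0, e_k)\}_{k \in \mathbb{N}}\). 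I would then verify, first for simple integrands and then by approximation via \Cref{prop: conv ondre} together with the \(L^2\)-isometry, that the \(H \oplus H\)-valued progressively measurable process \(\Phi \triangleq (\phi, \psi)\) satisfies
\[
\int_0^\cdot \langle \Phi_s, d\tilde W_s \rangle_{H \oplus H} = \int_0^\cdot \langle \phi_s, d W_s \rangle_H + \int_0^\cdot \langle \psi_s, d B_s \rangle_H.
\]
Writing \(Z^1\) and \(Z^2\) for the two terms on the right, applying the quadratic variation proposition (the first one of \Cref{app: TT}) to each of the three stochastic integrals then yields
\[
[Z^1 + Z^2, Z^1 + Z^2] = \int_0^\cdot \|\Phi_s\|_{H \oplus H}^2\, ds = \int_0^\cdot \|\phi_s\|_H^2\, ds + \int_0^\cdot \|\psi_s\|_H^2\, ds = [Z^1, Z^1] + [Z^2, Z^2],
\]
and the polarization identity \(2 [Z^1, Z^2] = [Z^1 + Z^2, Z^1 + Z^2] - [Z^1, Z^1] - [Z^2, Z^2]\) gives \([Z^1, Z^2] = 0\), as required.

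The main obstacle is the identification step above: rigorously justifying that the sum of the two stochastic integrals coincides with a single integral against the merged cylindrical Brownian motion, in a way compatible with the construction of the stochastic integral recalled in \Cref{sec: main}. For simple integrands both sides reduce to the same finite linear combination of one-dimensional stochastic integrals against the \(\beta^k\) and the \(\tilde\beta^k\), and the extension to general \(\phi, \psi\) is controlled by \Cref{prop: conv ondre}; this is a routine bookkeeping exercise but is the only part of the argument that does not follow immediately from the previously stated propositions.
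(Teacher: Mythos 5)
Your argument is correct, but it is genuinely different from what the paper does: the paper offers no proof at all for \Cref{prop: QV indep BM} and simply cites Proposition 4.7 of Rehmeier's paper \cite{rem20}, whereas you give a short self-contained derivation. Your route --- interleaving the components of \(W\) and \(B\) into a single standard \(\mathbb{R}^\infty\)-Brownian motion \(\tilde W\), viewing it as a cylindrical Brownian motion on \(H \oplus H\), identifying \(\int_0^\cdot \langle (\phi_s,\psi_s), d\tilde W_s\rangle_{H\oplus H}\) with the sum \(Z^1 + Z^2\) (first for simple integrands, then by approximation via \Cref{prop: conv ondre}), and finally polarizing the quadratic variation formula of the first, unnamed proposition of \Cref{app: TT} --- is sound: the joint independence of the combined family follows from the independence hypothesis together with the internal independence of each family, the norm identity \(\|(\phi,\psi)\|^2_{H\oplus H} = \|\phi\|^2_H + \|\psi\|^2_H\) makes the integrability hypotheses match, and bilinearity of \([\cdot,\cdot]\) gives \(2[Z^1,Z^2] = [Z^1+Z^2,Z^1+Z^2] - [Z^1,Z^1] - [Z^2,Z^2] = 0\). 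What your approach buys is that the appendix becomes self-contained modulo its own first proposition, at the cost of the bookkeeping you correctly identify as the only nontrivial step (the compatibility of the merged integral with the construction recalled in \Cref{sec: main}); what the paper's citation buys is precisely the avoidance of that bookkeeping, since \cite{rem20} proves the statement directly in the trace class picture. Either way the result is established, and your proof would be an acceptable replacement for the citation.
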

 
 Finally, we also provide a simple chain rule, which can be proven by first checking it for simple processes and then using an approximation argument. 
\begin{proposition} \label{prop: simple chain rule}
Let \(g = (g_t)_{t \geq 0}\) be an \(L\)-valued progressively measurable process and let \(\theta^* = (\theta^*_t)_{t \geq 0}\) be a \(U^*\)-valued progressively measurable process such that a.s.
\[
\int_0^t \big(\|g_s\|^2_L + \|g^*_s \theta_s^*\|^2_H \big) ds < \infty,\quad t \in \mathbb{R}_+, 
\]
and define a cylindrical local martingale \(Z = \{Z (y^*) \colon y^* \in U^*\}\) by 
\[
Z (y^*) \triangleq \int_0^\cdot \langle g^*_s y^*, d W_s \rangle_H, \quad y^* \in U^*.
\]
Then, the stochastic integrals
\[
\int_0^\cdot \langle g^*_s\theta_s^*, d W_s \rangle_H, \qquad \int_0^\cdot \langle d Z_s, \theta_s^* \rangle_U 
\]
are well-defined (where the second integral is defined as in \textup{\cite{miro98}}) and 
\[
\int_0^\cdot \langle g^*_s\theta_s^*, d W_s \rangle_H= \int_0^\cdot \langle d Z_s, \theta_s^* \rangle_U.
\]
\end{proposition}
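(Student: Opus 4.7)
The plan is to follow the two-step prescription hinted at in the paper: verify the identity for simple $U^*$-valued integrands, then extend by approximation. The content of the statement is exactly this compatibility between the two constructions of the stochastic integral, and it propagates from simple $\theta^*$ to general $\theta^*$ once a common continuity statement is available.

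\textbf{Step 1 (simple integrands).} For $\theta^*_t = \sum_{k=1}^m f^k_t y^*_k$ with $f^k$ bounded, real-valued, progressively measurable and $y^*_k \in U^*$, linearity of both integrals reduces the identity to a single summand $\theta^*_t = f_t y^*$. Scalar linearity of the integral against the cylindrical Brownian motion $W$ gives
$$\int_0^\cdot \langle g^*_s (f_s y^*), dW_s \rangle_H = \int_0^\cdot f_s\, \langle g^*_s y^*, dW_s\rangle_H,$$
while the integral $\int_0^\cdot \langle dZ_s, f_s y^*\rangle_U$ is, by the very definition in \cite{miro98}, equal to $\int_0^\cdot f_s\, dZ_s(y^*)$. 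Since $Z(y^*) = \int_0^\cdot \langle g^*_s y^*, dW_s\rangle_H$ by hypothesis, associativity of the scalar It\^o integral against a continuous local martingale identifies the two expressions.

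\textbf{Step 2 (approximation and limit).} For general $\theta^*$ satisfying the integrability hypothesis, the objective is to pick a sequence $(\theta^{*,n})$ of simple $U^*$-valued processes with
$$\int_0^T \|g^*_s(\theta^{*,n}_s - \theta^*_s)\|^2_H\, ds \longrightarrow 0$$
in probability for every $T>0$. For the left-hand side, \Cref{prop: conv ondre} then yields the locally uniform convergence in probability of $\int_0^\cdot \langle g^*_s \theta^{*,n}_s, dW_s\rangle_H$ to $\int_0^\cdot \langle g^*_s \theta^*_s, dW_s\rangle_H$. For the right-hand side, the construction of the cylindrical stochastic integral in \cite{miro98} is built around the It\^o isometry $\E|\int_0^T \langle dZ_s, \theta^*_s\rangle_U|^2 = \E\int_0^T \|g^*_s\theta^*_s\|^2_H\, ds$ on simple integrands (followed by the usual localisation for the general integrability condition), so that $\int_0^\cdot \langle dZ_s, \theta^{*,n}_s\rangle_U \to \int_0^\cdot \langle dZ_s, \theta^*_s\rangle_U$ in the same sense. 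Combining this with Step~1 delivers the claimed equality in the limit.

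\textbf{Main obstacle.} The genuine subtlety lies in Step~2 when $U^*$ fails to be separable: the standard density of simple processes in $L^2$ relies on separability, so $U^*$-valued progressively measurable processes cannot in general be approximated in the strong sense by processes of the required form $\sum_k f^k_t y^*_k$. The natural way around this is to observe that the only object which drives both sides is the $H$-valued process $g^*\theta^*$, which lies in the separable space $L^2_{\loc}(\P\otimes dt; H)$ by the integrability assumption and there admits standard simple approximations. One then has to realise those $H$-valued approximants as $g^*\theta^{*,n}$ for suitable simple $\theta^{*,n}$, or equivalently define the right-hand side directly via its isometry extension from integrands of the prescribed form; this bookkeeping is what transports the extension argument cleanly from $H$ back to $U^*$.
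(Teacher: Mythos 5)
Your proposal is correct and follows exactly the route the paper itself indicates: the paper offers no detailed proof of \Cref{prop: simple chain rule} beyond the remark that it ``can be proven by first checking it for simple processes and then using an approximation argument,'' and your Step~1/Step~2 structure, with \Cref{prop: conv ondre} controlling the left-hand side and the isometry underlying the construction in \cite{miro98} controlling the right-hand side, is precisely that argument fleshed out. Your closing observation about separability of \(U^*\) is a reasonable caveat, and your suggested resolution --- passing through the separable space \(L^2_\loc(\P\otimes dt; H)\) via \(g^*\theta^*\) (or, equivalently, noting that a strongly measurable \(\theta^*\) is essentially separably valued) --- is consistent with how the cylindrical integral of \cite{miro98} is actually built.
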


\bibliographystyle{plain}

\end{document}